\DeclareMathOperator{\N}{\mathbf{N}}
\DeclareMathOperator{\R}{\mathbf{R}}
\DeclareMathOperator{\C}{\mathbf{C}}
\DeclareMathOperator{\sgn}{\operatorname{sgn}}
\DeclareMathOperator{\Aug}{\operatorname{Aug}}
\DeclareMathOperator{\Def}{\operatorname{Def}}
\newcommand{\beq}[1]{\begin{equation}\label{#1}}
\newcommand{\eeq}{\end{equation}}
\newcommand{\bpm}{\begin{pmatrix}}
\newcommand{\epm}{\end{pmatrix}}
\newcommand{\bsqm}{\begin{bmatrix}}
\newcommand{\esqm}{\end{bmatrix}}
\newcommand{\bsm}{\left(\begin{smallmatrix}}
\newcommand{\esm}{\end{smallmatrix}\right)}
\newcommand{\pars}[1]{\left( #1 \right)}
\newcommand{\curlies}[1]{\left\{ #1 \right\}}
\newcommand{\bfone}{\mathbf{1}}
\renewcommand{\Re}{\operatorname{Re}}
\newtheorem{theorem}{Theorem}[section]
\newtheorem{lemma}[theorem]{Lemma}
\newtheorem{cor}[theorem]{Corollary}
\theoremstyle{definition}
\newtheorem{definition}[theorem]{Definition}
\newtheorem{example}[theorem]{Example}
\newtheorem{xca}[theorem]{Exercise}
\theoremstyle{remark}
\newtheorem{remark}[theorem]{Remark}
\numberwithin{equation}{section}
\newcommand{\abs}[1]{\lvert#1\rvert}
\newcommand{\blankbox}[2]{%
  \parbox{\columnwidth}{\centering
    \setlength{\fboxsep}{0pt}%
    \fbox{\raisebox{0pt}[#2]{\hspace{#1}}}%
  }%
}
\begin{document}

\title{Meromorphic continuation of the mean signature of fractional brownian motion}

\author{Andrew Ursitti}
\address{West Lafayette, Indiana}

\email{X@Y.org, X = andrew, Y = ursitti}


\date{\today}


\begin{abstract}
It is proved that the mean signature of multi-dimensional 
fractional brownian motion admits a meromorphic continuation in the 
hurst parameter to the entire complex plane. Each contstituent mean
iterated integral is a sum of 
hypergeometric integrals indexed by the pair partitions 
which refine the partition arising from the sequential list of 
integrands which defines it. Furthermore, 
each such hypergeometric integral is holomorphic 
in the complement of a finite union of rational progressions
determined by the combinatorial structure of the pair partition
which defines it.
It is not proved that these
singularities actually exist, it is only proved 
that the singularities are of finite order and they can only 
occur in the specified discrete set of rational numbers. 
\end{abstract}

\maketitle

\section*{Introduction}
Let $B=(B^1,\ldots, B^d)$ be a $d$-dimensional 
fractional brownian motion with hurst parameter $H\in(0,1)$ and 
let $\{V_1,\ldots, V_d\}$ be a list of $d$ smooth vector 
fields on $\R^n$, bounded and having bounded derivatives
of all orders. 
For fixed $H>1/2$ and $x\in\R^n$, let $X^{x}$ denote 
the $\R^n$-valued stochastic process defined by
\begin{equation}\label{asdfjkl;400}
X^{x}_t=x+\sum_{i=1}^d\int_0^tV_i(X_s^{x})dB^i_s.
\end{equation}
Here the vector field $V_i$ is viewed as a mapping $V_i:\R^n\to\R^n$.
Since $H>1/2$ is assumed, (\ref{asdfjkl;400}) can be 
defined using Young's theory of integration \cites{MR2604669,MR1555421}.

Associated with the fractional brownian motion $B$, and the vector 
fields $\{V_1,\ldots, V_d\}$,
one has the expectation operator
$\mathbf{P}_t^{x}:\mathscr{C}^\infty_c(\R^n;\C)\to\C$ given by 
$\mathbf{P}_t^{x}f=\mathbf{E}[f(X^{x}_t)]$. 
In \cite{MR2320949}, Baudoin and Coutin proved that there is a 
family $\{\Gamma_k^H\}_{k\geq 0}$ of differential operators on 
$\R^n$ such that for each $N$,
\[
\mathbf{P}^x_tf=\sum_{k=0}^Nt^{2kH}\Gamma_k^Hf(x)+o(t^{(2N+1)H})
\]
as $t\downarrow 0$. Moreover, they proved that 
\begin{equation}\label{asdfjkl;401}
\Gamma_k^H=\sum_{i_1,\ldots, i_{2k}\in\{1,\ldots, d\}}
\mathbf{E}\left[\int_{\Delta^{2k}[0,1]}dB^{i_1}_{t_1}\cdots dB^{i_{2k}}_{t_{2k}}\right]
V_{i_1}\cdots V_{i_{2k}}
\end{equation}
and 
\begin{align}
\mathbf{E}\left[\int_{\Delta^{2k}[0,1]} dB^{i_1}_{t_1}\cdots dB^{i_{2k}}_{t_{2k}}\right]& \notag\\
&\hspace{-3cm}=\frac{H^k(2H-1)^k}{k!2^k} \notag \\
&\hspace{-2.5cm}\times\sum_{\sigma\in \mathfrak{S}_{2k}}
\int_{\Delta^{2k}[0,1]}\prod_{l=1}^k\delta_{i_{\sigma(2l-1)},i_{\sigma(2l)}}
|s_{\sigma(2l)}-s_{\sigma(2l-1)}|^{2H-2}ds_1\wedge\ldots \wedge ds_{2k}.\label{asdfjkl;405}
\end{align}
where $\Delta^{2k}[0,1]=\{t\in\R^{2k}:0<t_1<\ldots <t_{2k}<1\}$ is the standard 
increasing simplex. To 
clarify this expression, observe that the 
integrals which define the summands are encoded by \emph{pair partitions}
of the set $\{1,\ldots, 2k\}$. In other words, 
if $\mathcal{P}=\{\{j_1^1,j_1^2\},\ldots,\{j_{k}^1,j_k^2\}\}$ is a pair 
partition of $\{1,\ldots, 2k\}$ then one can 
consider the hypergeometric integral
\begin{equation}\label{asdfjkl;402}
L(\mathcal{P};H)
=\int_{\Delta^{2k}[0,1]}\prod_{l=1}^k|s_{j_l^1}-s_{j_l^2}|^{2H-2}ds_1\wedge\ldots \wedge ds_{2k},
\end{equation}
and the summands in (\ref{asdfjkl;405}) are such integrals.
However, not all pair partitions
occur - the factors $\delta_{i_{\sigma(2l-1)},i_{\sigma(2l)}}$
force only those pair partitions which refine the partition implied by 
the defining word $(i_1,\ldots, i_{2k})$
to occur in the sum. By this we mean that the integrand
in a given summand of (\ref{asdfjkl;405}) is nonzero if and only if 
$i_{\sigma(2l-1)}=i_{\sigma(2l)}$ for every index $l\leq k$, but 
the word $(i_1,\ldots, i_{2k})\in \{1,\ldots, d\}^{2k}$ already 
partitions the set $\{1,\ldots,2k\}$ according to the level sets 
of the map $p\mapsto i_p$, so the integrand of a given summand 
in (\ref{asdfjkl;405}) is nonzero if and only 
if the permutation $\sigma$ maps adjacent indices of the 
form $\{2l-1,2l\}$ into the 
blocks of the partition implied by the word 
$(i_1,\ldots, i_{2k})$. 

For example, suppose $d=6$, $k=5$, $(i_1,\ldots, i_{10})=(6,3,1,3,6,6,1,5,6,5)$ and
consider the four diagrams in Figure \ref{asdfjkl;figure1}.
The coloring scheme indicates the partition defined by the 
word $(6,3,1,3,6,6,1,5,6,5)$, i.e. distinct numbers have a 
common color if and only if they belong to the same level set of 
the map $(1,2,3,4,5,6,7,8,9,10)\mapsto (6,3,1,3,6,6,1,5,6,5)$, 
or equivalently if they belong to the same block of the implied 
partition. The bracketings indicate pair partitions, or 
equivalently elements of the quotient $\mathfrak{S}_{2k}/M_{2k}$, 
where $M_{2k}\subset \mathfrak{S}_{2k}$ is the abelian subgroup of 
order $2^k$ generated by the adjacency transpositions
$(1,2),(3,4),\ldots,(2k-1,2k)$.

\begin{figure}[h]
\begin{center}
\begin{pspicture}(0,1cm)
\rnode{N1}{\psframebox*[fillstyle=solid,fillcolor=red]{1}}
\rnode{N2}{\psframebox*[fillstyle=solid,fillcolor=green]{2}}
\rnode{N3}{\psframebox*[fillstyle=solid,fillcolor=cyan]{3}}
\rnode{N4}{\psframebox*[fillstyle=solid,fillcolor=green]{4}}
\rnode{N5}{\psframebox*[fillstyle=solid,fillcolor=red]{5}}
\rnode{N5.5}{}
\rnode{N6}{\psframebox*[fillstyle=solid,fillcolor=red]{6}}
\rnode{N7}{\psframebox*[fillstyle=solid,fillcolor=cyan]{7}}
\rnode{N8}{\psframebox*[fillstyle=solid,fillcolor=yellow]{8}}
\rnode{N9}{\psframebox*[fillstyle=solid,fillcolor=red]{9}}
\rnode{N10}{\psframebox*[fillstyle=solid,fillcolor=yellow]{10}}
\nput*[labelsep=4mm]{-90}{N5.5}{\fontsize{8pt}{8pt}$\mathcal{P}_1=\{\{1,2\},\{3,4\},\{5,6\},\{7,8\},\{9,10\}\}$}

\ncbar[angleA=90,arm=1mm,nodesep=2pt]{-}{N1}{N2}
\ncbar[angleA=90,arm=1mm,nodesep=2pt]{-}{N3}{N4}
\ncbar[angleA=90,arm=1mm,nodesep=2pt]{-}{N5}{N6}
\ncbar[angleA=90,arm=1mm,nodesep=2pt]{-}{N7}{N8}
\ncbar[angleA=90,arm=1mm,nodesep=2pt]{-}{N9}{N10}
\hspace{1cm}

\rnode{N01}{\psframebox*[fillstyle=solid,fillcolor=red]{1}}
\rnode{N02}{\psframebox*[fillstyle=solid,fillcolor=green]{2}}
\rnode{N03}{\psframebox*[fillstyle=solid,fillcolor=cyan]{3}}
\rnode{N04}{\psframebox*[fillstyle=solid,fillcolor=green]{4}}
\rnode{N05}{\psframebox*[fillstyle=solid,fillcolor=red]{5}}
\rnode{N05.5}{}
\rnode{N06}{\psframebox*[fillstyle=solid,fillcolor=red]{6}}
\rnode{N07}{\psframebox*[fillstyle=solid,fillcolor=cyan]{7}}
\rnode{N08}{\psframebox*[fillstyle=solid,fillcolor=yellow]{8}}
\rnode{N09}{\psframebox*[fillstyle=solid,fillcolor=red]{9}}
\rnode{N010}{\psframebox*[fillstyle=solid,fillcolor=yellow]{10}}
\nput*[labelsep=4mm]{-90}{N05.5}{\fontsize{8pt}{8pt}$\mathcal{P}_2=\{\{1,6\},\{2,4\},\{3,7\},\{5,9\},\{8,10\}\}$}

\ncbar[angleA=90,arm=1mm,nodesep=2pt]{-}{N01}{N06}
\ncbar[angleA=90,arm=2mm,nodesep=2pt]{-}{N02}{N04}
\ncbar[angleA=90,arm=3mm,nodesep=2pt]{-}{N03}{N07}
\ncbar[angleA=90,arm=2mm,nodesep=2pt]{-}{N05}{N09}
\ncbar[angleA=90,arm=1mm,nodesep=2pt]{-}{N08}{N010}
\end{pspicture}

\begin{pspicture}(0,1.65cm)
\rnode{N1}{\psframebox*[fillstyle=solid,fillcolor=red]{1}}
\rnode{N2}{\psframebox*[fillstyle=solid,fillcolor=green]{2}}
\rnode{N3}{\psframebox*[fillstyle=solid,fillcolor=cyan]{3}}
\rnode{N4}{\psframebox*[fillstyle=solid,fillcolor=green]{4}}
\rnode{N5}{\psframebox*[fillstyle=solid,fillcolor=red]{5}}
\rnode{N5.5}{}
\rnode{N6}{\psframebox*[fillstyle=solid,fillcolor=red]{6}}
\rnode{N7}{\psframebox*[fillstyle=solid,fillcolor=cyan]{7}}
\rnode{N8}{\psframebox*[fillstyle=solid,fillcolor=yellow]{8}}
\rnode{N9}{\psframebox*[fillstyle=solid,fillcolor=red]{9}}
\rnode{N10}{\psframebox*[fillstyle=solid,fillcolor=yellow]{10}}
\nput*[labelsep=4mm]{-90}{N5.5}{\fontsize{8pt}{8pt}$\mathcal{P}_3=\{\{1,9\},\{2,4\},\{3,7\},\{5,6\},\{8,10\}\}$}

\ncbar[angleA=90,arm=3mm,nodesep=2pt]{-}{N1}{N9}
\ncbar[angleA=90,arm=1mm,nodesep=2pt]{-}{N2}{N4}
\ncbar[angleA=90,arm=2mm,nodesep=2pt]{-}{N3}{N7}
\ncbar[angleA=90,arm=1mm,nodesep=2pt]{-}{N5}{N6}
\ncbar[angleA=90,arm=2mm,nodesep=2pt]{-}{N8}{N10}

\hspace{1cm}

\rnode{N01}{\psframebox*[fillstyle=solid,fillcolor=red]{1}}
\rnode{N02}{\psframebox*[fillstyle=solid,fillcolor=green]{2}}
\rnode{N03}{\psframebox*[fillstyle=solid,fillcolor=cyan]{3}}
\rnode{N04}{\psframebox*[fillstyle=solid,fillcolor=green]{4}}
\rnode{N05}{\psframebox*[fillstyle=solid,fillcolor=red]{5}}
\rnode{N05.5}{}
\rnode{N06}{\psframebox*[fillstyle=solid,fillcolor=red]{6}}
\rnode{N07}{\psframebox*[fillstyle=solid,fillcolor=cyan]{7}}
\rnode{N08}{\psframebox*[fillstyle=solid,fillcolor=yellow]{8}}
\rnode{N09}{\psframebox*[fillstyle=solid,fillcolor=red]{9}}
\rnode{N010}{\psframebox*[fillstyle=solid,fillcolor=yellow]{10}}
\nput*[labelsep=4mm]{-90}{N05.5}{\fontsize{8pt}{8pt}$\mathcal{P}_4=\{\{1,9\},\{2,7\},\{3,4\},\{5,6\},\{8,10\}\}$}

\ncbar[angleA=90,arm=3mm,nodesep=2pt]{-}{N01}{N09}
\ncbar[angleA=90,arm=2mm,nodesep=2pt]{-}{N02}{N07}
\ncbar[angleA=90,arm=1mm,nodesep=2pt]{-}{N03}{N04}
\ncbar[angleA=90,arm=1mm,nodesep=2pt]{-}{N05}{N06}
\ncbar[angleA=90,arm=1mm,nodesep=2pt]{-}{N08}{N010}
\end{pspicture}
\vspace{.5cm}
\caption{}\label{asdfjkl;figure1}
\vspace{1mm}
\end{center}
\end{figure}

Evidently $\mathcal{P}_2$ and $\mathcal{P}_3$ 
both
refine the level set partition 
of the map 
\[
(1,2,3,4,5,6,7,8,9,10)\mapsto (6,3,1,3,6,6,1,5,6,5),
\]
so $L(\mathcal{P}_2,H)$ and $L(\mathcal{P}_3,H)$ as defined in 
(\ref{asdfjkl;402}) each occur with multiplicity $2^k=|M_{2k}|$ 
in the sum (\ref{asdfjkl;405}). However, 
$\mathcal{P}_1=1M_{2k}\in \mathfrak{S}_{2k}/M_{2k}$ and 
$\mathcal{P}_4$ \emph{do not} refine the aforementioned 
level set partition, as 
can be easily seen from the coloring scheme since the constituent 
pairs $\{1,2\},\{3,4\},\{7,8\},\{9,10\}$ in $\mathcal{P}_1$ and
$\{3,4\},\{2,7\}$ in $\mathcal{P}_2$ connect distinct 
colors. Therefore, $L(\mathcal{P}_2,H)$ and $L(\mathcal{P}_3,H)$
do not occur in the sum (\ref{asdfjkl;405}). 

Refinement of partitions 
will be abbreviated with the ``$\leq$" symbol, so the statement 
$\{j_l^1,j_l^2\}\leq (i_1,\ldots, i_{2k})$ is true 
if and only if $i_{j^1_l}=i_{j_l^2}$ and $\mathcal{P}\leq (i_1,\ldots, i_{2k})$
if each constituent pair of $\mathcal{P}$ is $\leq(i_1,\ldots, i_{2k})$. 
With this notation
we can rewrite (\ref{asdfjkl;405}) as 
\begin{equation}
\mathbf{E}\left[\int_{\Delta^{2k}[0,1]} dB^{i_1}_{t_1}\cdots dB^{i_{2k}}_{t_{2k}}\right]=\frac{H^k(2H-1)^k}{k!} 
\sum_{\mathcal{P}\leq (i_1,\ldots, i_{2k})}
L(\mathcal{P};H).\label{asdfjkl;406}
\end{equation}

A natural question to ask is that of the 
possibility of the meromorphic continuation 
of the operators $\mathbf{P}_t^x$ to a complex 
neighborhood of the
interval $H\in(1/2,1)$, even for those $H$ at which
the stochastic 
integral (\ref{asdfjkl;400}) is ill-posed. With this
in mind, the authors
of \cite{MR2320949} conjectured that the coefficients 
of the constituent operators 
$\Gamma_k^H$ for $k\geq 2$ have meromorphic continuations
with poles in the set $\{1/2j:2\leq j\leq k\}$. In this 
note we prove that the $\Gamma_k^H$ 
have meromorphic continuations to the entire complex 
plane in the variable $H$, and we show that poles can 
only occur in certain specific sets of rational numbers 
which depend on the combinatorial structure of the 
words which define the constituent iterated integrals.
For $\Re H>0$, our results indicate that there may be
more poles than those which are conjectured to 
exist in \cite{MR2320949}. However, none of these 
poles is actually shown to exist - the laurent series 
coefficients are still prohibitively 
complicated so as of yet we cannot prove that they're nonzero.

For any  
pair $\{j^1,j^2\}\subset \{1,\ldots 2k\}$, define
\[
I(\{j^1,j^2\})=[j^1\wedge j^2+1,j^1\vee j^2]
=\{j^1\wedge j^2+1,\ldots, j^1\vee j^2\}\subset\{2,\ldots, 2k\}.
\]
For example, if 
$k=5$ then $I(\{10,6\})=[7,10]=\{7,8,9,10\}$,
$I(\{7,3\})=[4,7]=\{4,5,6,7\}$ and $I(\{1,2\})=[2]=\{2\}$\footnote{Here
and below we 
use the notation $[n]=[n,n]=\{n\}$ for an interval of natural numbers
with only one element}. The 
significance of $I(\{j^1,j^2\})$ is that if without loss of 
generality $j^1<j^2$ then
\[
s_{j^2}-s_{j^1}=(s_{j^2}-s_{j^2-1})+(s_{j^2-1}-s_{j^2-2})+\ldots
+(s_{j^1+1}-s_{j^1})
\]
and $I(\{j^1,j^2\})$ represents the subset of variables $\{x_1,\ldots, x_{2k}\}$
in the image of this sum under the change of variable $x_i=s_i-s_{i-1}$ $(s_0=0)$.

For any pair partition $\mathcal{P}=\{\{j_1^1,j_1^2\},\ldots,\{j_{k}^1,j_k^2\}\}$, 
the map $I$ can be applied
to each of the pairs in $\mathcal{P}$, thus obtaining a map
\[
\mathcal{I}:\{\text{pair partitions of }\{1,\ldots, 2k\}\}\longrightarrow
\{\text{sets of }k\text{ subintervals of }\{2,\ldots, 2k\}\}.
\]
For example, if $k=3$ and $\mathcal{P}_1=\{\{4,6\},\{5,2\},\{1,3\}\}$ then 
$\mathcal{I}(\mathcal{P}_1)=\{[5,6],[3,5],[2,3]\}$.
On the other hand if $\mathcal{P}_2=\{\{1,6\},\{2,5\},\{3,4\}\}$ then 
$\mathcal{I}(\mathcal{P}_2)=\{[2,6],[3,5],[4]\}$,
and if $\mathcal{P}_3=\{\{1,4\},\{2,5\},\{3,6\}\}$ then 
$\mathcal{I}(\mathcal{P}_3)=\{[2,4],[3,5],[4,6]\}$.
Now for any pair partition $\mathcal{P}$ of $\{1,\ldots, 2k\}$ and 
any subset $S\subset \{1,\ldots,2k\}$ define 
\[
[S|\mathcal{P}]=|\mathfrak{P}(S)\cap\mathcal{I}(\mathcal{P})|
=\text{number of subsets of }S\text{ which are elements of }\mathcal{I}(\mathcal{P}).
\]

\begin{theorem}\label{asdfjkl;404}
For any pair partition $\mathcal{P}=\{\{j_1^1,j_1^2\},\ldots,\{j_k^1,j_k^2\}\}$
of the set $[1,2k]$, the 
hypergeometric integral $L(\mathcal{P};H)$
has a meromorphic continuation in the variable $H$ to the 
entire plane $\C$ which is holomorphic 
in the complement of the following union of
rational progressions:
\[
\curlies{1-\frac{|S|+l}{2[S|\mathcal{P}]}:S\subset [1, 2k] ,[S|\mathcal{P}]>0,l=0,1,2,\ldots}\subset\mathbf{Q}.
\]
\end{theorem}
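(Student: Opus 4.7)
The plan is to prove the theorem by a sector decomposition of the integration domain followed by iterated Taylor expansion, with the specific rational progressions emerging as ``Selberg-type'' pole loci coming from the scaling of each initial segment of a permutation.

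I begin with the change of variables $x_i = s_i - s_{i-1}$ ($s_0 = 0$) foreshadowed in the text. Under this substitution, $\Delta^{2k}[0,1]$ is mapped onto $\Omega = \{x \in \R_{\geq 0}^{2k} : \sum_i x_i \leq 1\}$, and each $|s_{j_l^1} - s_{j_l^2}|$ becomes $\sum_{i \in I_l} x_i$, where $I_l = I(\{j_l^1, j_l^2\})$. Hence
\[
L(\mathcal{P};H) = \int_{\Omega} \prod_{l=1}^{k} \left( \sum_{i \in I_l} x_i \right)^{2H-2} dx,
\]
which converges absolutely for $\Re H$ sufficiently large. I then partition $\Omega = \bigcup_{\sigma \in \mathfrak{S}_{2k}} \Omega_\sigma$ into sectors $\Omega_\sigma = \Omega \cap \{x_{\sigma(1)} \leq \cdots \leq x_{\sigma(2k)}\}$, and on each sector introduce the nested ratio coordinates $t_{2k} = x_{\sigma(2k)}$ and $t_j = x_{\sigma(j)}/x_{\sigma(j+1)}$ for $j < 2k$, inverted as $x_{\sigma(j)} = t_j t_{j+1} \cdots t_{2k}$ with Jacobian $\prod_{i=2}^{2k} t_i^{i-1}$. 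The image $D_\sigma$ has the form $(t_1, \ldots, t_{2k-1}) \in [0,1]^{2k-1}$ with $t_{2k} \in [0, 1/g(t_1,\ldots,t_{2k-1})]$ for a smooth polynomial $g$ bounded below by $1$.

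Setting $p_l = \max_{i \in I_l} \sigma^{-1}(i)$ and factoring out the dominant summand in each linear form gives
\[
\sum_{i \in I_l} x_i = t_{p_l} t_{p_l+1} \cdots t_{2k} \cdot f_l(t),
\]
where $f_l$ is a polynomial in $(t_q)_{q < p_l}$ with constant term $1$ and non-negative coefficients, hence $f_l \geq 1$ on $D_\sigma$. Combined with the Jacobian, the integrand over $\Omega_\sigma$ becomes
\[
\prod_{j=1}^{2k} t_j^{\alpha_j(H)} \prod_{l=1}^{k} f_l(t)^{2H-2}, \qquad \alpha_j(H) = (j-1) + (2H-2)\,N_j^\sigma,
\]
where $N_j^\sigma = \#\{l : p_l \leq j\} = [S_j^\sigma | \mathcal{P}]$ for $S_j^\sigma = \{\sigma(1), \ldots, \sigma(j)\}$, a subset of size $j$. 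The remaining factor $\prod_l f_l(t)^{2H-2}$ is $C^\infty$ in $t \in \overline{D}_\sigma$ and entire in $H$. To continue each sector contribution $L_\sigma(H)$ meromorphically, I would apply iteratively the standard one-variable lemma: for smooth $h$ and smooth positive $u$, $\int_0^u t^\alpha h(t)\,dt$ continues meromorphically in $\alpha$ to $\C$ via Taylor expansion of $h$ at $t = 0$, with simple poles only at $\alpha \in \{-1, -2, \ldots\}$. Each pole $\alpha_j + r + 1 = 0$ translates into a pole of $L_\sigma(H)$ at
\[
H = 1 - \frac{j + r}{2 N_j^\sigma} = 1 - \frac{|S_j^\sigma| + r}{2[S_j^\sigma | \mathcal{P}]},
\]
provided $N_j^\sigma > 0$ (otherwise $\alpha_j$ is $H$-independent and no pole arises). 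Since every nonempty $S \subset [1, 2k]$ with $[S|\mathcal{P}] > 0$ is of the form $S_{|S|}^\sigma$ for some $\sigma$ (list the elements of $S$ in increasing order first), summing $L(\mathcal{P};H) = \sum_\sigma L_\sigma(H)$ yields the claimed pole set.

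The chief obstacle I anticipate is the bookkeeping for Taylor remainders through the iteration: $D_\sigma$ is not a full cube, so the $t_{2k}$-integration produces factors of the form $(1/g)^{\alpha_{2k} + r + 1}$ that are smooth but not polynomial in $(t_1, \ldots, t_{2k-1})$. Because $g(0) = 1$ and $g$ is smooth and positive on $[0,1]^{2k-1}$, these factors remain entire in $H$ and $C^\infty$ in the remaining variables, so the subsequent Taylor expansions in $t_1, \ldots, t_{2k-1}$ proceed on $[0,1]$-intervals in each variable and close the iteration; nevertheless, an induction on the number of variables expanded together with uniform bounds on Taylor remainders (so that the meromorphic continuations patch globally) is where the technical work is concentrated.
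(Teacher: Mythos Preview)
Your argument is correct and yields exactly the pole set claimed, but it proceeds by a genuinely different route than the paper. The paper does \emph{not} slice the simplex into $(2k)!$ Hepp sectors; instead it builds a single global resolution of the orthant $\R^{2k}_+$ by introducing one affine form $f_S(y)=-q(|S|)+y\cdot\bfone_S$ for every nonempty $S\subset[1,2k]$ and proving that the polynomial map $F_i=\prod_{S\ni i}f_S$ is a diffeomorphism from $\Omega=\bigcap_S\{f_S>0\}$ onto $\R^{2k}_+$. The pullback of the integrand through $F$ contains the factors $f_S^{|S|-1+(2H-2)[S|\mathcal{P}]}$ all at once, and the combinatorial lemma that at any boundary point of $\Omega$ the vanishing $f_S$'s are indexed by a \emph{chain} of subsets then allows a partition-of-unity argument to localise to cubes. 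Your sector decomposition achieves the same localisation by a different mechanism: each permutation $\sigma$ picks out in advance one maximal chain $S_1^\sigma\subset\cdots\subset S_{2k}^\sigma$, and the nested ratios $t_j$ are essentially the local coordinates at the corresponding deepest boundary stratum. Your approach is more elementary and closer to the classical Speer/Hepp analytic-renormalisation literature; the paper's approach packages the chain structure into a single algebraic map and treats all subsets symmetrically, which is what makes it extend cleanly to the general integral $\mathfrak{I}^n((\lambda_S))$ with independent exponents. The technical point you flag---handling the non-cubical upper limit $t_{2k}\le 1/g$ through the iterated Taylor expansion---is real but routine: either substitute $u=t_{2k}g$ to reduce to a genuine cube (since $g\ge 1$ is smooth and $g(0)=1$, the extra factor $g^{-\alpha_{2k}(H)-1}$ is smooth in $t_1,\ldots,t_{2k-1}$ and entire in $H$), or note that $\Phi=\prod_l f_l^{2H-2}$ is independent of $t_{2k}$ so that the innermost integral is elementary.
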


\begin{cor}\label{asdfjkl;414}
For any word $(i_1,\ldots, i_{2k})\in \{1,\ldots,d\}^{2k}$, the 
sum 
\[
\frac{k!}{H^k(2H-1)^k}\mathbf{E}\left[\int_{\Delta^{2k}[0,1]} dB^{i_1}_{t_1}\cdots dB^{i_{2k}}_{t_{2k}}\right]=
\sum_{\mathcal{P}\leq (i_1,\ldots, i_{2k})}
L(\mathcal{P};H)
\]
has a meromorphic continuation in the variable $H$ to the 
entire plane $\C$ which is holomorphic 
in the complement of the following union of
rational progressions:
\[
\curlies{1-\frac{|S|+l}{2[S|\mathcal{P}]}:S\subset [1, 2k], \mathcal{P}\leq (i_1,\ldots, i_{2k}) ,[S|\mathcal{P}]>0,l=0,1,2,\ldots}\subset\mathbf{Q}.
\]
\end{cor}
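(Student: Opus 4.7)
The plan is to deduce the Corollary directly from Theorem \ref{asdfjkl;404} combined with the identity (\ref{asdfjkl;406}) already established in the introduction. The equality
\[
\frac{k!}{H^k(2H-1)^k}\mathbf{E}\left[\int_{\Delta^{2k}[0,1]} dB^{i_1}_{t_1}\cdots dB^{i_{2k}}_{t_{2k}}\right]=\sum_{\mathcal{P}\leq (i_1,\ldots, i_{2k})}L(\mathcal{P};H)
\]
in the regime $\Re H > 1/2$ is exactly (\ref{asdfjkl;406}), obtained by passing from the sum over $\sigma \in \mathfrak{S}_{2k}$ in (\ref{asdfjkl;405}) to the quotient $\mathfrak{S}_{2k}/M_{2k}$ of pair partitions. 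The Kronecker delta factors collapse the sum onto pair partitions refining the word $(i_1,\ldots,i_{2k})$, while each such pair partition is hit $|M_{2k}| = 2^k$ times by representatives in $\mathfrak{S}_{2k}$, cancelling the factor $2^k$ in the denominator.

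Given this identity, the Corollary reduces to a bookkeeping observation. The set of pair partitions $\mathcal{P}$ of $[1,2k]$ which refine $(i_1,\ldots,i_{2k})$ is finite, being contained in the set of all pair partitions of $[1,2k]$ (which has cardinality $(2k)!/(2^k k!)$). Hence the right-hand side is a finite sum of functions, each of which admits a meromorphic continuation to $\C$ by Theorem \ref{asdfjkl;404}. A finite sum of meromorphic functions on $\C$ is meromorphic on $\C$, and its poles are contained in the union of the individual pole sets. Taking the union over all $\mathcal{P} \leq (i_1,\ldots,i_{2k})$ of the exceptional sets furnished by Theorem \ref{asdfjkl;404} produces exactly the family of rational progressions appearing in the statement, which completes the proof.

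The main obstacle here is essentially nonexistent: all of the analytic difficulty has been absorbed into Theorem \ref{asdfjkl;404}, and the Corollary is little more than a repackaging. The role of the Corollary is to present the conclusion in the form most naturally suited to the Baudoin--Coutin question about the operators $\Gamma_k^H$, where the combinatorial structure of the defining word $(i_1,\ldots,i_{2k})$ restricts the set of admissible pair partitions and therefore, in principle, may cull some of the candidate poles that appear in the larger set attached to a generic pair partition by Theorem \ref{asdfjkl;404}.
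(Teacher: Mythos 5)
Your proposal is correct and follows exactly the same route as the paper: the paper dispatches this corollary in one sentence, stating that it follows immediately from Theorem \ref{asdfjkl;404} via the identity (\ref{asdfjkl;406}), which is precisely the finite-sum observation you make explicit. Your unpacking of the $\mathfrak{S}_{2k}/M_{2k}$ multiplicity $2^k$ and the finiteness of the index set is a faithful elaboration of the intended argument, not a deviation from it.
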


The corollary follows immediately from the theorem 
by way of the equality (\ref{asdfjkl;406}).
We stress yet again that since we've not proved 
that the meromorphic continuations are unbounded 
in every neighborhood of a point in the progression 
$1-(|S|+l)/2[S|\mathcal{P}]$, these numbers 
are merely the \emph{only candidates}
for the poles.

In order to understand the structure of
these rational progressions we
observe that 
$[S|\mathcal{P}]$ is additive 
with respect to any pairwise monotone and 
nonadjacent disjoint decomposition. 
By this we mean that if $S=\cup_{r=1}^s S_r$ and 
for $1\leq r<\rho\leq s$ the greatest 
element of $S_r$ is at least two less than 
the least element of $S_\rho$ then 
$[S|\mathcal{P}]=\sum_{r=1}^s[S_r|\mathcal{P}]$. 
This is easy to see, because the existence 
of a nontrivial gap between $S_r$ and $S_\rho$ 
means that any subinterval of $[1,2k]$ contained 
in $S_r\cup S_\rho$ must be contained in 
either $S_r$ or in $S_\rho$. 
In particular this is true of the maximal 
interval decomposition of $S$, i.e. 
$S=\cup_{r=1}^s I_r$ where each $I_r$ is 
an interval and if $1\leq r<\rho\leq s$ then 
the right endpoint of $I_r$ is at least two 
less than the left endpoint of $I_\rho$. 

Thus, apparently $[S|\mathcal{P}]=\sum_{r=1}^s [I_r|\mathcal{P}]$
so it is enough to understand the expression
$[I|\mathcal{P}]$ where $I$ is an interval. For this, 
we define two functions on the subintervals of $[1,2k]$
taking values in the subsets of $[1,2k]$,
the $\mathcal{P}$ \emph{augmentation} of $I$:
\[
\Aug_\mathcal{P}(I)=
\begin{cases}
I\cup\{\text{the element immediately left adjacent to } I\} \\
\quad \text{if the element immediately left adjacent to } I \\
\quad \text{is paired by }\mathcal{P} \text{ with an element of }I \\
I \text{ in all other cases},
\end{cases}
\]
and the $\mathcal{P}$ \emph{deficiency} of $I$:
\[
\Def_\mathcal{P}(I)=
\curlies{\text{elements of }I\text{ which are \emph{not} paired by }
\mathcal{P}\text{ with elements of } \Aug_\mathcal{P}(I)}.
\]

With these definitions in place, it is straightforward 
to see that $\Aug_\mathcal{P}(I)\setminus \Def_\mathcal{P}(I)$
is precisely the union of $\mathcal{P}$-pairs $\{j^1,j^2\}$ such 
that $I(\{j^1,j^2\})\subset I$, and therefore
\[
2[I|\mathcal{P}]=|\Aug_\mathcal{P}(I)|-|\Def_\mathcal{P}(I)|.
\]
Consequently, 
\[
2[S|\mathcal{P}]=\sum_{r=1}^s|\Aug_\mathcal{P}(I_r)|-|\Def_\mathcal{P}(I_r)|
\]
where $S=\cup_{r=1}^s I_r$ still denotes the maximal interval decomposition 
of $S$. The following corollary follows immediately 
from Theorem \ref{asdfjkl;404} by decomposing all subsets of $[1,2k]$ 
into maximal connected intervals.

\begin{cor}\label{asdfjkl;1404}
For any pair partition $\mathcal{P}=\{\{j_1^1,j_1^2\},\ldots,\{j_k^1,j_k^2\}\}$
of the set $[1, 2k]$, the 
hypergeometric integral $L(\mathcal{P};H)$
has a meromorphic continuation in the variable $H$ to the 
entire plane $\C$ which is holomorphic 
in the complement of the union of 
rational progressions of the form
\begin{equation}\label{asdfjkl;1000}
\curlies{1-\frac{\sum_{r=1}^s|I_r|+l}{\sum_{r=1}^s|\Aug_\mathcal{P}(I_r)|-|\Def_\mathcal{P}(I_r)|}:l=0,1,2,\ldots}\subset\mathbf{Q}
\end{equation}
with $\{I_1,\ldots, I_s\}$ equal to any 
collection of pairwise nonadjacent subintervals of $[1,2k]$ 
such that the denominator is nonzero.
\end{cor}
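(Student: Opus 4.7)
The plan is to deduce Corollary \ref{asdfjkl;1404} by a direct re-indexing of the candidate pole set produced by Theorem \ref{asdfjkl;404}. That theorem already establishes the meromorphic continuation of $L(\mathcal{P};H)$ to $\mathbf{C}$ and identifies the candidate pole set as
\[
\curlies{1-\frac{|S|+l}{2[S|\mathcal{P}]}:S\subset[1,2k],\ [S|\mathcal{P}]>0,\ l=0,1,2,\ldots},
\]
so only the re-parametrization of the indexing set $S$ and the rewriting of $2[S|\mathcal{P}]$ in terms of $\Aug_\mathcal{P}$ and $\Def_\mathcal{P}$ remain to be carried out.

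First I would exhibit a bijection between subsets $S\subset[1,2k]$ and pairwise nonadjacent families $\{I_1,\ldots,I_s\}$ of subintervals of $[1,2k]$, given in one direction by sending $S$ to its maximal interval decomposition and in the other by sending a family to its union. The pairwise nonadjacency condition, namely that the right endpoint of $I_r$ be at least two less than the left endpoint of $I_\rho$ whenever $r<\rho$, is precisely the gap hypothesis that validates the additivity $[S|\mathcal{P}]=\sum_{r=1}^s[I_r|\mathcal{P}]$ observed in the paragraph preceding the corollary; under the bijection, $|S|=\sum_r|I_r|$ is automatic.

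Next I would substitute the identity $2[I|\mathcal{P}]=|\Aug_\mathcal{P}(I)|-|\Def_\mathcal{P}(I)|$, derived immediately above the statement, into the denominator to obtain
\[
2[S|\mathcal{P}]=\sum_{r=1}^s\bigl(|\Aug_\mathcal{P}(I_r)|-|\Def_\mathcal{P}(I_r)|\bigr).
\]
The condition $[S|\mathcal{P}]>0$ is then exactly the condition that this sum be nonzero, the numerator $|S|+l$ becomes $\sum_r|I_r|+l$, and consequently each candidate pole $1-(|S|+l)/(2[S|\mathcal{P}])$ coincides with an expression of the form displayed in (\ref{asdfjkl;1000}). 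Taking the union over valid $S$ yields the union over valid pairwise nonadjacent families.

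I do not expect a genuine obstacle, since all the combinatorial work has already been carried out in the discussion preceding the corollary; the content here is the observation that those two identities together repackage the candidate pole set in a form indexed directly by interval families. The only point meriting a moment's care is that the maximal interval decomposition gives a bijection rather than a mere surjection, so that no $S$ is counted twice and no family is missed, and this is immediate from the uniqueness of the maximal interval decomposition of a finite subset of $\mathbf{Z}$.
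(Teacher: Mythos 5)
Your proposal is correct and follows essentially the same route as the paper: decompose $S$ into its maximal interval decomposition, invoke the additivity of $[\cdot\,|\,\mathcal{P}]$ across pairwise nonadjacent pieces, and substitute $2[I|\mathcal{P}]=|\Aug_\mathcal{P}(I)|-|\Def_\mathcal{P}(I)|$ into the denominator from Theorem \ref{asdfjkl;404}. The paper states this as an immediate consequence of the discussion preceding the corollary; you have merely spelled out the bijection between subsets of $[1,2k]$ and pairwise nonadjacent interval families, which is a harmless and accurate elaboration of the same argument.
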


The size $|I_r|$ of any of the intervals in Corollary \ref{asdfjkl;1404}
can be extracted from the expression in the denominator
$|\Aug_\mathcal{P}(I_r)|-|\Def_\mathcal{P}(I_r)|$ by observing 
that $|\Aug_\mathcal{P}(I_r)|$ is either equal to $|I_r|$ or 
$|I_r|+1$ depending on whether or not $\Aug_\mathcal{P}(I_r)=I_r$ 
or not. In the latter case we say that $I_r$ is $\mathcal{P}$-\emph{augmented}, 
and 
\[
\sum_{r=1}^s|\Aug_\mathcal{P}(I_r)|=\sum_{r=1}^s|I_r|+
(\text{number of }\mathcal{P} -\text{augmented intervals in } \{I_1,\ldots ,I_s\}).
\]
Likewise, an element of $\Def_\mathcal{P}(I_r)$ is said to be $\mathcal{P}$-deficient
and 
\[
\sum_{r=1}^s|\Def_\mathcal{P}(I_r)|=
(\text{number of }\mathcal{P} -\text{deficient points in } \cup_{r=1}^s I_r\}).
\]
Therefore, by decomposing an arbitrary subset $S\subset [1,2k]$ uniquely 
as a union $S=\cup_{r=1}^s I_r$ of pairwise nonadjacent intervals and defining the 
$\mathcal{P}$-\emph{deficient} points of $S$ as the union of those in the $I_r$
individually, the expression in (\ref{asdfjkl;1000})
can be rewritten as
\begin{equation}\label{asdfjkl;1001}
1-\frac{|S|+l}{\pars{\begin{array}{c}|S|+(\text{number of }\mathcal{P} -\text{augmented components in } S)\\
-(\text{number of }\mathcal{P}-\text{deficient points in }S)\end{array}}}.
\end{equation}

Note that for any $S$ and any $\mathcal{P}$, 
the number of $\mathcal{P}$-augmented components 
in $S$ is at most $|S|$ and for this reason the fraction 
written above is not less than $1/2$.
With this in mind, the following corollary follows immediately 
from Theorem \ref{asdfjkl;404} and 
Corollary \ref{asdfjkl;1404}.

\begin{cor}\label{asdfjkl;407}
For any pair partition $\mathcal{P}=\{\{j_1^1,j_1^2\},\ldots,\{j_k^1,j_k^2\}\}$
of the set $[1,2k]$, the meromorphic continuation of the
hypergeometric integral $L(\mathcal{P};H)$ is holomorphic at all nonreal points and all
real points greater than the maximal value of (\ref{asdfjkl;1001}) over all 
subsets $S\subset [1,2k]$ such that the denominator is nonzero. In 
particular it is holomorphic at all points greater than $1/2$ and 
the only possible poles in the interval $(0,1/2]$ are at those values 
of (\ref{asdfjkl;1404}) arising from the subsets $S\subset [1,2k]$ such 
that the number of $\mathcal{P}$-augmented components in $S$ 
exceeds the number of $\mathcal{P}$-deficiencies in $S$.
\end{cor}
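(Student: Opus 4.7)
The plan is to derive the corollary directly from Theorem \ref{asdfjkl;404} and Corollary \ref{asdfjkl;1404}; no new analytic input is required, only bookkeeping on the combinatorial quantities $|S|$, $|\Aug_\mathcal{P}(I_r)|$ and $|\Def_\mathcal{P}(I_r)|$ that already appear there.

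First, since Theorem \ref{asdfjkl;404} expresses the complement of the domain of holomorphy as a countable union of progressions contained in $\mathbf{Q}\subset\mathbf{R}$, the meromorphic continuation of $L(\mathcal{P};H)$ is automatically holomorphic at every nonreal $H$. Thus the question reduces to locating the rightmost possible real pole. For any fixed $S\subset[1,2k]$ with $[S|\mathcal{P}]>0$, the real values $1-(|S|+l)/(2[S|\mathcal{P}])$ form a strictly decreasing arithmetic progression in $l=0,1,2,\ldots$, so the supremum of candidate poles coming from $S$ is attained at $l=0$. Taking a supremum over admissible $S$ and invoking Corollary \ref{asdfjkl;1404} then shows that $L(\mathcal{P};H)$ is holomorphic at every real $H$ strictly greater than the maximum of (\ref{asdfjkl;1001}), which is the first assertion of the corollary.

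Next, I would use the reformulation (\ref{asdfjkl;1001}), in which $2[S|\mathcal{P}]=|S|+A(S)-E(S)$, where $A(S)$ denotes the number of $\mathcal{P}$-augmented components and $E(S)$ the number of $\mathcal{P}$-deficient points of the maximal-interval decomposition of $S$. Because the number of components of $S$ is at most $|S|$ and each such component contributes at most one to $A(S)$, while $E(S)\geq 0$, one obtains $2[S|\mathcal{P}]\leq 2|S|$, hence $1-|S|/(2[S|\mathcal{P}])\leq 1/2$. This uniform bound gives holomorphy on the open half-line $H>1/2$. Finally, for a candidate pole $1-(|S|+l)/(2[S|\mathcal{P}])$ to lie in the interval $(0,1/2]$ one needs $2[S|\mathcal{P}]>|S|+l\geq |S|$, equivalently $A(S)>E(S)$; this is exactly the necessary condition stated in the corollary.

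There is no substantive obstacle here: the entire argument is an elementary combination of monotonicity in $l$ with the two inequalities $A(S)\leq |S|$ and $E(S)\geq 0$. The only minor point requiring attention is that the candidate progressions are only defined when $[S|\mathcal{P}]>0$, so the supremum in the first assertion must be taken over subsets $S$ for which the denominator is nonzero, in agreement with the precise statement.
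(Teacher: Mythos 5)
Your argument is correct and is essentially the paper's argument, which records the same two observations — that the candidate poles lie in $\mathbf{Q}$ (hence are real), and that the number of $\mathcal{P}$-augmented components is at most $|S|$ so the fraction in (\ref{asdfjkl;1001}) is at least $1/2$ — before declaring the corollary immediate; you simply spell out the remaining bookkeeping (monotonicity in $l$, and the equivalence $2[S|\mathcal{P}]>|S|\Leftrightarrow A(S)>E(S)$) explicitly.
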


\begin{figure}
\begin{center}
\begin{pspicture}(0,1cm)

\rnode{NA1}{\psframebox*[linewidth=.5mm,fillstyle=solid,fillcolor=red]  {1}} 
\rnode{NA2}{\psframebox[linewidth=.5mm,fillstyle=solid,fillcolor=green]  {2}}
\rnode{NA3}{\psframebox[linewidth=.5mm,fillstyle=solid,fillcolor=cyan]  {3}}
\rnode{NA4}{\psframebox[linewidth=.5mm,fillstyle=solid,fillcolor=yellow]  {4}}
\rnode{NA5}{\psframebox[linewidth=.5mm,fillstyle=solid,fillcolor=cyan]  {5}}
\rnode{NA6}{\psframebox[linewidth=.5mm,fillstyle=solid,fillcolor=yellow]  {6}} 
\rnode{NA7}{\psframebox[linewidth=.5mm,fillstyle=solid,fillcolor=red]  {7}}
\rnode{NA8}{\psframebox[linewidth=.5mm,fillstyle=solid,fillcolor=green]  {8}}
\rnode{NA9}{\psframebox*[linewidth=.5mm,fillstyle=solid,fillcolor=cyan]  {9}}
\rnode{NA9.5}{}
\rnode{NA10}{\psframebox[linewidth=.5mm,fillstyle=solid,fillcolor=violet] {10}}
\rnode{NA11}{\psframebox[linewidth=.5mm,fillstyle=solid,fillcolor=cyan] {11}}
\rnode{NA12}{\psframebox*[linewidth=.5mm,fillstyle=solid,fillcolor=olive] {12}}
\rnode{NA13}{\psframebox[linewidth=.5mm,fillstyle=solid,fillcolor=red] {13}}
\rnode{NA14}{\psframebox[linewidth=.5mm,fillstyle=solid,fillcolor=red] {14}}
\rnode{NA15}{\psframebox[linewidth=.5mm,fillstyle=solid,fillcolor=violet] {15}}
\rnode{NA16}{\psframebox[linewidth=.5mm,fillstyle=solid,fillcolor=violet] {16}}
\rnode{NA17}{\psframebox[linewidth=.5mm,fillstyle=solid,fillcolor=olive] {17}}
\rnode{NA18}{\psframebox*[linewidth=.5mm,fillstyle=solid,fillcolor=violet] {18}}

\nput*[labelsep=4mm]{-90}{NA9.5}{\fontsize{8pt}{8pt}$S=[2,8]\cup[10,11]\cup[13,17]\quad|S|=14\quad2[S|\mathcal{P}]=|S|+3-1=16$}

\ncbar[angleA=90,arm=1mm,nodesep=2pt]{-}{NA4}{NA6}
\ncbar[angleA=90,arm=1mm,nodesep=2pt]{-}{NA13}{NA14}
\ncbar[angleA=90,arm=1mm,nodesep=2pt]{-}{NA9}{NA11}
\ncbar[angleA=90,arm=1mm,nodesep=2pt]{-}{NA15}{NA16}

\ncbar[angleA=90,arm=2mm,nodesep=2pt]{-}{NA3}{NA5}
\ncbar[angleA=90,arm=2mm,nodesep=2pt]{-}{NA12}{NA17}

\ncbar[angleA=90,arm=3mm,nodesep=2pt]{-}{NA2}{NA8}
\ncbar[angleA=90,arm=3mm,nodesep=2pt]{-}{NA10}{NA18}

\ncbar[angleA=90,arm=4mm,nodesep=2pt]{-}{NA1}{NA7}
\end{pspicture}

\begin{pspicture}(0,2cm)

\rnode{NA1}{\psframebox*[linewidth=.5mm,fillstyle=solid,fillcolor=red]  {1}} 
\rnode{NA2}{\psframebox*[linewidth=.5mm,fillstyle=solid,fillcolor=green]  {2}}
\rnode{NA3}{\psframebox[linewidth=.5mm,fillstyle=solid,fillcolor=cyan]  {3}}
\rnode{NA4}{\psframebox[linewidth=.5mm,fillstyle=solid,fillcolor=yellow]  {4}}
\rnode{NA5}{\psframebox*[linewidth=.5mm,fillstyle=solid,fillcolor=cyan]  {5}}
\rnode{NA6}{\psframebox[linewidth=.5mm,fillstyle=solid,fillcolor=yellow]  {6}} 
\rnode{NA7}{\psframebox[linewidth=.5mm,fillstyle=solid,fillcolor=red]  {7}}
\rnode{NA8}{\psframebox[linewidth=.5mm,fillstyle=solid,fillcolor=green]  {8}}
\rnode{NA9}{\psframebox[linewidth=.5mm,fillstyle=solid,fillcolor=cyan]  {9}}
\rnode{NA9.5}{}
\rnode{NA10}{\psframebox[linewidth=.5mm,fillstyle=solid,fillcolor=violet] {10}}
\rnode{NA11}{\psframebox[linewidth=.5mm,fillstyle=solid,fillcolor=cyan] {11}}
\rnode{NA12}{\psframebox*[linewidth=.5mm,fillstyle=solid,fillcolor=olive] {12}}
\rnode{NA13}{\psframebox[linewidth=.5mm,fillstyle=solid,fillcolor=red] {13}}
\rnode{NA14}{\psframebox[linewidth=.5mm,fillstyle=solid,fillcolor=red] {14}}
\rnode{NA15}{\psframebox*[linewidth=.5mm,fillstyle=solid,fillcolor=violet] {15}}
\rnode{NA16}{\psframebox*[linewidth=.5mm,fillstyle=solid,fillcolor=violet] {16}}
\rnode{NA17}{\psframebox[linewidth=.5mm,fillstyle=solid,fillcolor=olive] {17}}
\rnode{NA18}{\psframebox[linewidth=.5mm,fillstyle=solid,fillcolor=violet] {18}}

\nput*[labelsep=4mm]{-90}{NA9.5}{\fontsize{8pt}{8pt}$S=[3,4]\cup[6,11]\cup[13,14]\cup[17,18]\quad|S|=12\quad2[S|\mathcal{P}]=|S|+0-8=4$}

\ncbar[angleA=90,arm=1mm,nodesep=2pt]{-}{NA4}{NA6}
\ncbar[angleA=90,arm=1mm,nodesep=2pt]{-}{NA13}{NA14}
\ncbar[angleA=90,arm=1mm,nodesep=2pt]{-}{NA9}{NA11}
\ncbar[angleA=90,arm=1mm,nodesep=2pt]{-}{NA15}{NA16}

\ncbar[angleA=90,arm=2mm,nodesep=2pt]{-}{NA3}{NA5}
\ncbar[angleA=90,arm=2mm,nodesep=2pt]{-}{NA12}{NA17}

\ncbar[angleA=90,arm=3mm,nodesep=2pt]{-}{NA2}{NA8}
\ncbar[angleA=90,arm=3mm,nodesep=2pt]{-}{NA10}{NA18}

\ncbar[angleA=90,arm=4mm,nodesep=2pt]{-}{NA1}{NA7}
\end{pspicture}

\begin{pspicture}(0,2cm)

\rnode{NA1}{\psframebox[linewidth=.5mm,fillstyle=solid,fillcolor=red]  {1}} 
\rnode{NA2}{\psframebox[linewidth=.5mm,fillstyle=solid,fillcolor=green]  {2}}
\rnode{NA3}{\psframebox[linewidth=.5mm,fillstyle=solid,fillcolor=cyan]  {3}}
\rnode{NA4}{\psframebox*[linewidth=.5mm,fillstyle=solid,fillcolor=yellow]  {4}}
\rnode{NA5}{\psframebox[linewidth=.5mm,fillstyle=solid,fillcolor=cyan]  {5}}
\rnode{NA6}{\psframebox[linewidth=.5mm,fillstyle=solid,fillcolor=yellow]  {6}} 
\rnode{NA7}{\psframebox*[linewidth=.5mm,fillstyle=solid,fillcolor=red]  {7}}
\rnode{NA8}{\psframebox[linewidth=.5mm,fillstyle=solid,fillcolor=green]  {8}}
\rnode{NA9}{\psframebox[linewidth=.5mm,fillstyle=solid,fillcolor=cyan]  {9}}
\rnode{NA9.5}{}
\rnode{NA10}{\psframebox*[linewidth=.5mm,fillstyle=solid,fillcolor=violet] {10}}
\rnode{NA11}{\psframebox*[linewidth=.5mm,fillstyle=solid,fillcolor=cyan] {11}}
\rnode{NA12}{\psframebox[linewidth=.5mm,fillstyle=solid,fillcolor=olive] {12}}
\rnode{NA13}{\psframebox*[linewidth=.5mm,fillstyle=solid,fillcolor=red] {13}}
\rnode{NA14}{\psframebox[linewidth=.5mm,fillstyle=solid,fillcolor=red] {14}}
\rnode{NA15}{\psframebox*[linewidth=.5mm,fillstyle=solid,fillcolor=violet] {15}}
\rnode{NA16}{\psframebox[linewidth=.5mm,fillstyle=solid,fillcolor=violet] {16}}
\rnode{NA17}{\psframebox*[linewidth=.5mm,fillstyle=solid,fillcolor=olive] {17}}
\rnode{NA18}{\psframebox[linewidth=.5mm,fillstyle=solid,fillcolor=violet] {18}}

\nput*[labelsep=4mm]{-90}{NA9.5}{\fontsize{8pt}{8pt}$S=[1,3]\cup[5,9]\cup[12]\cup[14]\cup[16]\cup[18] \quad|S|=11\quad2[S|\mathcal{P}]=|S|+3-8=6$}

\ncbar[angleA=90,arm=1mm,nodesep=2pt]{-}{NA4}{NA6}
\ncbar[angleA=90,arm=1mm,nodesep=2pt]{-}{NA13}{NA14}
\ncbar[angleA=90,arm=1mm,nodesep=2pt]{-}{NA9}{NA11}
\ncbar[angleA=90,arm=1mm,nodesep=2pt]{-}{NA15}{NA16}

\ncbar[angleA=90,arm=2mm,nodesep=2pt]{-}{NA3}{NA5}
\ncbar[angleA=90,arm=2mm,nodesep=2pt]{-}{NA12}{NA17}

\ncbar[angleA=90,arm=3mm,nodesep=2pt]{-}{NA2}{NA8}
\ncbar[angleA=90,arm=3mm,nodesep=2pt]{-}{NA10}{NA18}

\ncbar[angleA=90,arm=4mm,nodesep=2pt]{-}{NA1}{NA7}
\end{pspicture}

\begin{pspicture}(0,2cm)

\rnode{NA1}{\psframebox*[linewidth=.5mm,fillstyle=solid,fillcolor=red]  {1}} 
\rnode{NA2}{\psframebox*[linewidth=.5mm,fillstyle=solid,fillcolor=green]  {2}}
\rnode{NA3}{\psframebox*[linewidth=.5mm,fillstyle=solid,fillcolor=cyan]  {3}}
\rnode{NA4}{\psframebox[linewidth=.5mm,fillstyle=solid,fillcolor=yellow]  {4}}
\rnode{NA5}{\psframebox[linewidth=.5mm,fillstyle=solid,fillcolor=cyan]  {5}}
\rnode{NA6}{\psframebox[linewidth=.5mm,fillstyle=solid,fillcolor=yellow]  {6}} 
\rnode{NA7}{\psframebox*[linewidth=.5mm,fillstyle=solid,fillcolor=red]  {7}}
\rnode{NA8}{\psframebox*[linewidth=.5mm,fillstyle=solid,fillcolor=green]  {8}}
\rnode{NA9}{\psframebox*[linewidth=.5mm,fillstyle=solid,fillcolor=cyan]  {9}}
\rnode{NA9.5}{}
\rnode{NA10}{\psframebox*[linewidth=.5mm,fillstyle=solid,fillcolor=violet] {10}}
\rnode{NA11}{\psframebox*[linewidth=.5mm,fillstyle=solid,fillcolor=cyan] {11}}
\rnode{NA12}{\psframebox*[linewidth=.5mm,fillstyle=solid,fillcolor=olive] {12}}
\rnode{NA13}{\psframebox*[linewidth=.5mm,fillstyle=solid,fillcolor=red] {13}}
\rnode{NA14}{\psframebox[linewidth=.5mm,fillstyle=solid,fillcolor=red] {14}}
\rnode{NA15}{\psframebox*[linewidth=.5mm,fillstyle=solid,fillcolor=violet] {15}}
\rnode{NA16}{\psframebox[linewidth=.5mm,fillstyle=solid,fillcolor=violet] {16}}
\rnode{NA17}{\psframebox*[linewidth=.5mm,fillstyle=solid,fillcolor=olive] {17}}
\rnode{NA18}{\psframebox*[linewidth=.5mm,fillstyle=solid,fillcolor=violet] {18}}

\nput*[labelsep=4mm]{-90}{NA9.5}{\fontsize{8pt}{8pt}$S=[4,6]\cup[14]\cup[16]\quad|S|=5\quad2[S|\mathcal{P}]=|S|+3-0=8$}

\ncbar[angleA=90,arm=1mm,nodesep=2pt]{-}{NA4}{NA6}
\ncbar[angleA=90,arm=1mm,nodesep=2pt]{-}{NA13}{NA14}
\ncbar[angleA=90,arm=1mm,nodesep=2pt]{-}{NA9}{NA11}
\ncbar[angleA=90,arm=1mm,nodesep=2pt]{-}{NA15}{NA16}

\ncbar[angleA=90,arm=2mm,nodesep=2pt]{-}{NA3}{NA5}
\ncbar[angleA=90,arm=2mm,nodesep=2pt]{-}{NA12}{NA17}

\ncbar[angleA=90,arm=3mm,nodesep=2pt]{-}{NA2}{NA8}
\ncbar[angleA=90,arm=3mm,nodesep=2pt]{-}{NA10}{NA18}

\ncbar[angleA=90,arm=4mm,nodesep=2pt]{-}{NA1}{NA7}
\end{pspicture}

\begin{pspicture}(0,2cm)

\rnode{NA1}{\psframebox*[linewidth=.5mm,fillstyle=solid,fillcolor=red]  {1}} 
\rnode{NA2}{\psframebox[linewidth=.5mm,fillstyle=solid,fillcolor=green]  {2}}
\rnode{NA3}{\psframebox[linewidth=.5mm,fillstyle=solid,fillcolor=cyan]  {3}}
\rnode{NA4}{\psframebox[linewidth=.5mm,fillstyle=solid,fillcolor=yellow]  {4}}
\rnode{NA5}{\psframebox[linewidth=.5mm,fillstyle=solid,fillcolor=cyan]  {5}}
\rnode{NA6}{\psframebox[linewidth=.5mm,fillstyle=solid,fillcolor=yellow]  {6}} 
\rnode{NA7}{\psframebox[linewidth=.5mm,fillstyle=solid,fillcolor=red]  {7}}
\rnode{NA8}{\psframebox*[linewidth=.5mm,fillstyle=solid,fillcolor=green]  {8}}
\rnode{NA9}{\psframebox*[linewidth=.5mm,fillstyle=solid,fillcolor=cyan]  {9}}
\rnode{NA9.5}{}
\rnode{NA10}{\psframebox[linewidth=.5mm,fillstyle=solid,fillcolor=violet] {10}}
\rnode{NA11}{\psframebox[linewidth=.5mm,fillstyle=solid,fillcolor=cyan] {11}}
\rnode{NA12}{\psframebox*[linewidth=.5mm,fillstyle=solid,fillcolor=olive] {12}}
\rnode{NA13}{\psframebox[linewidth=.5mm,fillstyle=solid,fillcolor=red] {13}}
\rnode{NA14}{\psframebox[linewidth=.5mm,fillstyle=solid,fillcolor=red] {14}}
\rnode{NA15}{\psframebox[linewidth=.5mm,fillstyle=solid,fillcolor=violet] {15}}
\rnode{NA16}{\psframebox[linewidth=.5mm,fillstyle=solid,fillcolor=violet] {16}}
\rnode{NA17}{\psframebox[linewidth=.5mm,fillstyle=solid,fillcolor=olive] {17}}
\rnode{NA18}{\psframebox*[linewidth=.5mm,fillstyle=solid,fillcolor=violet] {18}}

\nput*[labelsep=4mm]{-90}{NA9.5}{\fontsize{8pt}{8pt}$S=[2,7]\cup[10,11]\cup[13,17]\quad|S|=13\quad2[S|\mathcal{P}]=|S|+3-2=14$}

\ncbar[angleA=90,arm=1mm,nodesep=2pt]{-}{NA4}{NA6}
\ncbar[angleA=90,arm=1mm,nodesep=2pt]{-}{NA13}{NA14}
\ncbar[angleA=90,arm=1mm,nodesep=2pt]{-}{NA9}{NA11}
\ncbar[angleA=90,arm=1mm,nodesep=2pt]{-}{NA15}{NA16}

\ncbar[angleA=90,arm=2mm,nodesep=2pt]{-}{NA3}{NA5}
\ncbar[angleA=90,arm=2mm,nodesep=2pt]{-}{NA12}{NA17}

\ncbar[angleA=90,arm=3mm,nodesep=2pt]{-}{NA2}{NA8}
\ncbar[angleA=90,arm=3mm,nodesep=2pt]{-}{NA10}{NA18}

\ncbar[angleA=90,arm=4mm,nodesep=2pt]{-}{NA1}{NA7}
\end{pspicture}

\vspace{.75cm}
\caption{}\label{asdfjkl;figure2}
\vspace{1mm}
\end{center}
\end{figure}

Figure \ref{asdfjkl;figure2} provides a few 
diagrammatic examples 
of the calculation of the number $2[S|\mathcal{P}]$. 
In each example, the coloring scheme 
indicates the partition defined by any word 
$(i_1,\ldots, i_{2k})\in \{1,\ldots, l\}^{2k}$ with 
exactly six values (so $l\geq 6$ necessarily), the level 
sets of which are indicated by blocks with common colors. 
The bracketings in each example define the pair partition 
\[
\mathcal{P}=\{\{1,7\},\{2,8\},\{3,5\},\{4,6\},\{9,11\},\{10,18\},\{12,17\},\{13,14\},\{15,16\}\}.
\]
Sets $S\subset [1,18]$ are indicated by bold borders 
drawn around an element, an element is a member of $S$ 
in each case if its border is emboldened. Evidently, 
the diagrams in Figure \ref{asdfjkl;figure2} will 
respectively contribute the rational progressions 
$\frac{1}{8}-\frac{1}{16}\mathbf{N}_{\geq 0},
-2-\frac{1}{4}\mathbf{N}_{\geq 0},
-\frac{5}{6}-\frac{1}{6}\mathbf{N}_{\geq 0},
\frac{3}{8}-\frac{1}{8}\mathbf{N}_{\geq 0}$,
and $\frac{1}{14}-\frac{1}{14}\mathbf{N}_{\geq 0}$
to the set of candidate poles for the meromorphic 
continuation of the integral $L(\mathcal{P};H)$.

The remaining sections will be devoted to the proof of Theorem \ref{asdfjkl;404}. 
Regarding the mere \emph{existence} of the meromorphic continuation, the result 
is not surprising and probably exists already in the 
literature in one form or another. 
Investigations of similar types of integrals have preoccupied many 
prominent researchers \cites{MR0256156,MR878239,MR0247457,MR0166596,MR1384760}. The pair
partition integrals 
$L(\{\{j_1^1,j_1^2\},\ldots,\{j_k^1,j_k^2\}\};H)$ bear a strong resemblance to the
standard beta integral 
$\int_0^1t^{\alpha-1}(1-t)^{\beta-1} dt=\Gamma(\alpha)\Gamma(\beta)/\Gamma(\alpha+\beta)$, 
along with its higher dimensional analog
\[
\int_{\Delta^n[0,1]}t_1^{\lambda_1-1}(t_2-t_1)^{\lambda_2-1}\cdots(1-t_n)^{\lambda_{n+1}-1}dt_1\cdots dt_n
=\frac{\Gamma(\lambda_1)\cdots\Gamma(\lambda_{n+1})}{\Gamma(\lambda_1+\cdots +\lambda_{n+1})}.
\]
Computation of these integrals is a straightforward matter, i.e. 
reduction of the multi-dimensional case to the one-dimensional 
case can be achieved by computing it as an iterated integral. Alternatively,
one can make a change of variable to express it as an integral over the standard simplex
$\Delta^n_1=\{(x_0,\ldots,x_n)\in\R^{n+1}:x_i>0, \sum x_i=1\}$, multiply by the 
convergence factor $e^{-\sum_i x_i}$ and use a homogeneity argument to obtain the 
desired equality.

Neither of these strategies works universally for the pair partition
integrals $L(\{\{j_1^1,j_1^2\},\ldots,\{j_k^1,j_k^2\}\};H)$ and the obstacle is 
easy to identify, the linear factors $s_{j_l^1}-s_{j_l^2}$ of 
the integrand are not necessarily differences of coordinates having 
adjacent indices so they do not vanish on the codimension one boundary strata of the 
simplex - instead, they vanish on lower dimensional boundary strata. To 
overcome this obstacle, we will generalize the problem and compute instead
the possible locations of the singularities in $\C^{2^{n}-1}$ of the meromorphic continuation
of the integral
\begin{equation}\label{asdfjkl;100}
\mathfrak{I}^n((\lambda_S)_{S\neq \varnothing})
=\int_{\Delta_n}\prod_{S\neq \varnothing}(x\cdot \bfone_S)^{\lambda_S} dx_1\wedge\ldots\wedge dx_n.
\end{equation}
Here the vectors $\bfone_S$ are the characteristic functions of the subsets of $\{1,\ldots, n\}$, so 
$x\cdot \bfone_S=\sum_{i\in S}x_i$, and $\Delta_{n}$ is the $dx_1\wedge\ldots\wedge dx_n$-oriented
simplex $\Delta_{n}=\{x\in\R^n: x_i>0, \sum x_i<1\}$. Analysis of the 
meromorphic continuation of (\ref{asdfjkl;100}) will be achieved by blowing up the 
simplex $\Delta_n$ in the following manner. In section 1 we define for $S\subset[1,n]$ 
the affine form $f_S(y)=-q(|S|)+y\cdot \bfone_S$ for $y\in \mathbf{R}^n$ and 
a strategically chosen function $q:\mathbf{N}_{\geq 0}\rightarrow \mathbf{N}_{\geq 0}$.
It is then proved that the map $F:\mathbf{R}^n\rightarrow \mathbf{R}^n$ defined by 
$F_i=\prod_{S\ni i} f_S$ is polynomial diffeomorphism from the region 
$\Omega=\bigcap_{S\neq\varnothing}\{f_S>0\}\subset \mathbf{R}^n$ onto 
the orthant $(0,\infty)^n\subset \mathbf{R}^n$ (Lemma \ref{asdfjkl;9}), which maps 
$\Omega'=\Omega\cap\{\sum_i F_i<1\}$ onto the simplex $\Delta_n$. 
For $\Re \lambda_S>0$, $\mathfrak{I}^n((\lambda_S)_{S\neq \varnothing})$ is therefore 
equal to the integral of the $n$-form pullback 
$F^\ast (\prod_{S\neq\varnothing}(x\cdot \bfone_S)^{\lambda_S}dx_1\wedge\ldots\wedge dx_n)$
over the region $\Omega'$. Evidently after rearranging terms, 
\begin{align*}
F^\ast &\pars{\prod_{S\neq\varnothing}(x\cdot \bfone_S)^{\lambda_S}dx_1\wedge\ldots\wedge dx_n} \\
&\hspace{2cm} =
\prod_{S\neq \varnothing}f_S^{\sum_{T\subset S}\lambda_T}P_S^{\lambda_S}\det(\partial F/\partial y)dy_1\wedge\ldots\wedge dy_n \\
&\hspace{2cm} =
\prod_{S\neq \varnothing}f_S^{|S|-1+\sum_{T\subset S}\lambda_T}P_S^{\lambda_S}
\frac{\det(\partial F/\partial y)}{\prod_{S\neq\varnothing}f_S^{|S|-1}}dy_1\wedge\ldots\wedge dy_n.
\end{align*}
where $P_S=\sum_{i\in S}\prod_{S\not\subset T\ni i}f_T$. 
However, in section 1 it is proved that the rational quotient
$R=\det(\partial F/\partial y)/\prod_{S\neq\varnothing}f_S^{|S|-1}$, written in (\ref{asdfjkl;700}), 
is a polynomial which is strictly positive on the closure of $\Omega'$
(Lemma \ref{asdfjkl;8} and related commentary). Furthermore, in 
section 2 it is proved that for each nonempty $S$,
$P_S$ is strictly positive 
on the closure of $\Omega'$. Therefore,
\begin{equation}\label{asdfjkl;6000}
\mathfrak{I}^n((\lambda_S)_{S\neq \varnothing})
=\int_{\Omega'}\prod_{S\neq \varnothing}f_S^{|S|-1+\sum_{T\subset S}\lambda_T} P_S^{\lambda_S} R dy_1\wedge\ldots\wedge dy_n.
\end{equation}
The integral will converge absolutely in an open subset of 
$\C^{2^n-1}$ (the product of half planes with positive real part, for 
instance). The meromorphic continuation arises from this expression 
by dissecting $\Omega'$ in such a way that the closure of every boundary component of the 
dissection intersects exactly one connected boundary stratum of $\Omega'$ 
of minimal dimension (for that component). The factor $f_S^{|S|-1+\sum_{T\subset S}\lambda_T}$ 
will contribute a progression of singularities along the 
hyperplanes defined by $|S|+\sum_{T\subset S}\lambda_T\in \N_{\leq 0}$.
This procedure is described in section 2.

The pair partition integrals $L(\mathcal{P};H)$ can 
easily be converted to special forms of the more general simplex integrals
$\mathfrak{I}^{2k}((\lambda_S)_{S\neq \varnothing})$ by making the change of variable $x_1=s_1$, and $x_i=s_i-s_{i-1}$
for $i\in\{2,\ldots, 2k\}$ which maps the increasing simplex $\Delta^{2k}[0,1]$ bijectively onto 
the solid simplex $\Delta_{2k}=\{x\in\R^{2k}:x_i>0,\sum x_i<1\}$. Therefore,
with $\mathcal{P}=\{\{j_1^1,j_1^2\},\ldots,\{j_k^1,j_k^2\}\}$,
\begin{align}
L(\mathcal{P};H)
&=\int_{\Delta^{2k}[0,1]}\prod_{l=1}^k|s_{j_l^1}-s_{j_l^2}|^{2H-2}ds_1\wedge\ldots \wedge ds_{2k} \notag\\
&=\int_{\Delta_{2k}}\prod_{l=1}^k\pars{\sum_{i=j^1_l\wedge j^2_l+1}^{j^1_l\vee j^2_l}x_i}^{2H-2}dx_1\wedge\ldots \wedge dx_{2k} \notag\\
&=\int_{\Delta_{2k}}\prod_{l=1}^k(x\cdot\bfone_{I(\{j_l^1,j_l^2\})})^{2H-2}dx_1\wedge\ldots \wedge dx_{2k} \label{asdfjkl;600}
\end{align}
which is precisely the integral $\mathfrak{I}^{2k}$ evaluated at the parameter $(\lambda_S)_{S\neq\varnothing}$
defined by 
\[
\lambda_S=
\begin{cases}
2H-2 & \text{if }S\in \mathcal{I}(\mathcal{P}) \\
0 & \text{otherwise}.
\end{cases}
\]
Thus,
\begin{equation}\label{asdfjkl;5000}
L(\mathcal{P}; H) =
\int_{\Omega'} \prod_{S\neq \varnothing}f_S^{|S|-1+2(H-1)[S|\mathcal{P}]} \prod_{S\in \mathcal{I}(\mathcal{P})}P_S^{2H-2} Rdy_1\wedge\ldots\wedge dy_n,
\end{equation}
and (\ref{asdfjkl;406}) can be rewritten as
\begin{align}
\mathbf{E}&\left[\int_{\Delta^{2k}[0,1]} dB^{i_1}_{t_1}\cdots dB^{i_{2k}}_{t_{2k}}\right] 
=\frac{H^k(2H-1)^k}{k!} \notag\\
&
\times\sum_{\mathcal{P}\leq (i_1,\ldots, i_{2k})}
\int_{\Omega'} \prod_{S\neq \varnothing}f_S^{|S|-1+2(H-1)[S|\mathcal{P}]} \prod_{S\in \mathcal{I}(\mathcal{P})}P_S^{2H-2} Rdy_1\wedge\ldots\wedge dy_n.\label{asdfjkl;4060}
\end{align}

For any $z\in \C$, 
$1-(|S|+z)/2[S|\mathcal{P}]$ is the value of $H$ such that 
the exponent $|S|-1+2(H-1)[S|\mathcal{P}]$ written above 
is equal to $-1-z$, so by requiring $-1-z\in \{-1,-2,\ldots\}$
one obtains the rational progressions specified 
in Theorem \ref{asdfjkl;404}. In this manner,
Theorem \ref{asdfjkl;404} and therefore also 
Corollaries \ref{asdfjkl;414},\ref{asdfjkl;1404} and \ref{asdfjkl;407} follow from the analysis 
of the singularities of $\mathfrak{I}^n$ given in 
section 2.

\section{Blowup of the orthant $\R^{n}_+$}
In this section we will give a constructive resolution 
of singularities for the orthant $\R^{n}_+=(0,\infty)^{n}$. To begin, 
for each subset $S\subset \{1,\ldots, n\}$, define 
the vector $\bfone_S$ to be the characteristic function 
of the set $S$ (i.e. if $n=5$ and $S=\{2,3,5\}$ then 
$\bfone_S=(0,1,1,0,1)$) and the affine form 
\beq{asdfjkl;0}
f_S(y_1,\ldots, y_n)
= -q(|S|)+ y\cdot\bfone_S=-q(|S|)+\sum_{i\in S} y_i
\eeq
where $q:\N_{\geq 0}\to \N_{\geq 0}$ is an arbitrary 
function that satisfies
\beq{asdfjkl;1}
q(0)=1
\quad\text{and}\quad
q(a)+q(b) < q(\max\{a,b\}+1)\quad\text{and}\quad
3q(a)\leq q(a+1)
\eeq
($q(r)=3^r$ seems to be a natural choice). A finite collection 
$\{S_1,\ldots, S_{r}\}\subset \mathfrak{P}(\{1,\ldots, n\})$ 
of distinct subsets will be called \emph{monotone} if for
any two of them one is fully contained in the other, or equivalently 
if there 
exists a permutation $\sigma\in \mathfrak{S}_{r}$ such that 
$S_{\sigma(1)}\subset \cdots\subset S_{\sigma(r)}$.
\begin{lemma}\label{asdfjkl;2}
If $S_0,S_1\subset\{1,\ldots, n\}$ and $y\in \R^{n}$ is any 
point such that
$f_{S_0}(y)=f_{S_1}(y)=0$ then $f_{S_0\cup S_1}(y)+y
\cdot\bfone_{S_0\cap S_1}$ is positive 
if $\{S_0,S_1\}$ is a monotone pair and is otherwise negative. 
In particular, if $f_{S_0}(y)=f_{S_1}(y)=0$, 
$y\cdot\bfone_{S_0\cap S_1} \geq 0$ 
and $\{S_0,S_1\}$ is not a monotone pair, then $f_{S_0\cup S_1}(y)$ 
is negative.
\end{lemma}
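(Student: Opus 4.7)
The plan is to reduce the claim to a pure statement about the function $q$ by means of the set-theoretic identity $\mathbf{1}_{S_0}+\mathbf{1}_{S_1}=\mathbf{1}_{S_0\cup S_1}+\mathbf{1}_{S_0\cap S_1}$, and then invoke the structural hypotheses in (\ref{asdfjkl;1}).

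First I would observe that the two hypotheses $f_{S_0}(y)=f_{S_1}(y)=0$ translate via (\ref{asdfjkl;0}) into $y\cdot\bfone_{S_0}=q(|S_0|)$ and $y\cdot\bfone_{S_1}=q(|S_1|)$. Adding these and using the inclusion--exclusion identity above yields
\[
y\cdot\bfone_{S_0\cup S_1}+y\cdot\bfone_{S_0\cap S_1}=q(|S_0|)+q(|S_1|),
\]
so that
\[
f_{S_0\cup S_1}(y)+y\cdot\bfone_{S_0\cap S_1}=q(|S_0|)+q(|S_1|)-q(|S_0\cup S_1|).
\]
This reduces the claim to showing that the right-hand side has the asserted sign.

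Next I would note, using $3q(a)\le q(a+1)$ and $q(0)=1$ from (\ref{asdfjkl;1}), that $q$ is strictly positive and strictly increasing on $\N_{\geq 0}$. If $\{S_0,S_1\}$ is monotone, then (up to relabeling) $S_0\subset S_1$, so $|S_0\cup S_1|=|S_1|$ and the right-hand side equals $q(|S_0|)\ge 1>0$. If $\{S_0,S_1\}$ is not monotone, then $S_0\cup S_1$ strictly contains each of $S_0,S_1$, giving $|S_0\cup S_1|\ge \max\{|S_0|,|S_1|\}+1$; combining monotonicity of $q$ with the subadditivity condition $q(a)+q(b)<q(\max\{a,b\}+1)$ then gives
\[
q(|S_0|)+q(|S_1|)<q(\max\{|S_0|,|S_1|\}+1)\le q(|S_0\cup S_1|),
\]
so the right-hand side is negative.

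The ``In particular'' clause follows immediately: under the stated sign conditions and the non-monotonicity assumption, the identity above rearranges to $f_{S_0\cup S_1}(y)=-y\cdot\bfone_{S_0\cap S_1}+[q(|S_0|)+q(|S_1|)-q(|S_0\cup S_1|)]$, which is strictly negative because the bracketed term is negative and $-y\cdot\bfone_{S_0\cap S_1}\le 0$. There is no real obstacle here; the only point worth flagging is that the whole argument depends on the carefully engineered axiom list (\ref{asdfjkl;1}) for $q$, which is precisely what lets an essentially combinatorial inclusion--exclusion computation produce the required strict inequalities.
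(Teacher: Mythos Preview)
Your proof is correct and follows essentially the same route as the paper: both reduce to the identity $f_{S_0\cup S_1}(y)+y\cdot\bfone_{S_0\cap S_1}=q(|S_0|)+q(|S_1|)-q(|S_0\cup S_1|)$ via inclusion--exclusion, then split on monotonicity and invoke (\ref{asdfjkl;1}). The only cosmetic difference is that the paper rewrites $|S_0\cup S_1|=M+m-|S_0\cap S_1|$ and argues $m-|S_0\cap S_1|\geq 1$ in the non-monotone case, whereas you argue the equivalent $|S_0\cup S_1|\geq M+1$ directly; both then use (implicitly in the paper, explicitly in your write-up) that $q$ is increasing.
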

\begin{proof}
The hypothesis $f_{S_0}(y)=f_{S_1}(y)=0$ implies 
$y\cdot\bfone_{S_0}=q(|S_0|)$ and $y\cdot\bfone_{S_1}=q(|S_1|)$, 
thus
\begin{align*}
f_{S_0\cup S_1}(y)+y\cdot\bfone_{S_0\cap S_1}
&=-q(|S_0\cup S_1|)+y\cdot\bfone_{S_0\cup S_1}+y\cdot\bfone_{S_0\cap S_1} \\
&=-q(|S_0\cup S_1|)+y\cdot\bfone_{S_0}+y\cdot\bfone_{S_1} \\
&=-q(|S_0|+|S_1|-|S_0\cap S_1|)+q(|S_0|)+q(|S_1|).
\end{align*}
Set $M=\max\{|S_0|,|S_1|\}$ and $m=\min\{|S_0|,|S_1|\}$ so that 
\[
f_{S_0\cup S_1}(y)+y\cdot\bfone_{S_0\cap S_1}
=-q(M+m-|S_0\cap S_1|)+q(M)+q(m).
\]
If $S_0,S_1$ is a monotone pair then $m=|S_0\cap S_1|$ so 
\[
f_{S_0\cup S_1}(y)+y\cdot\bfone_{S_0\cap S_1}
=-q(M)+q(M)+q(m)=q(m)>0.
\]
If $S_0,S_1$ is not a monotone pair then $m-|S_0\cap S_1|\geq 1$ so 
\[
f_{S_0\cup S_1}(y)+y\cdot\bfone_{S_0\cap S_1}\leq -q(M+1)+q(M)+q(m)<0
\]
according to the assumed hypotheses (\ref{asdfjkl;1}) on $q$.
\end{proof}
Define the region $\Omega=\bigcap_{S\neq \varnothing}\{f_S>0\}\subset \R^{n}$ 
and the affine hyperplane $L_S=\{f_S=0\}\subset \R^{n}$ for each nonempty 
$S\subset\{1,\ldots, n\}$. 
\begin{lemma}\label{asdfjkl;3}
A set $\{f_{S_1},\ldots, f_{S_{r}}\}$ of affine forms of type 
(\ref{asdfjkl;0}) can vanish simunlaneously at a boundary point of 
the region $\Omega$ only if $\{S_1,\ldots, S_{r}\}$ is monotone 
and does not contain $\varnothing$, in which case the set 
\[
\Omega(\{S_1,\ldots,S_{r}\})=
\partial\Omega\bigcap (\cap_{S\in\{S_1,\ldots, S_{r}\}}L_{S})
\bigcap (\cup_{S\neq \varnothing,S\notin\{S_1,\ldots, S_{r}\}}L_{S})^c
\]
is a nonempty convex and relatively open subset of the codimension 
$r$ affine hyperplane $\cap_{S\in\{S_1,\ldots, S_{r}\}}L_{S}\subset \R^{n}$.
\end{lemma}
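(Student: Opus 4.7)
My plan is to prove the two halves of the lemma separately: necessity of monotonicity follows quickly from Lemma \ref{asdfjkl;2}, while nonemptiness of $\Omega(\{S_1,\ldots,S_r\})$ for a monotone chain requires an explicit construction whose correctness reduces to an inductive inequality about $q$. For the necessity direction, suppose $y\in \partial\Omega$, so $y\in\ov\Omega$ and $f_S(y)\geq 0$ for every $S\neq\varnothing$. The empty set is excluded from the vanishing collection because $f_\varnothing\equiv -q(0)=-1$. If $f_{S_0}(y)=f_{S_1}(y)=0$, then either $S_0\cap S_1=\varnothing$ (so $y\cdot\bfone_{S_0\cap S_1}=0$) or $S_0\cap S_1\neq\varnothing$ (so $y\cdot\bfone_{S_0\cap S_1}\geq q(|S_0\cap S_1|)\geq 1$ from $f_{S_0\cap S_1}(y)\geq 0$); in either case Lemma \ref{asdfjkl;2} applies, and if $\{S_0,S_1\}$ were not monotone it would produce $f_{S_0\cup S_1}(y)<0$, contradicting $y\in\ov\Omega$. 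Pairwise monotonicity throughout the vanishing collection makes the whole collection monotone.

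For the existence direction, arrange the chain as $S_1\subsetneq\cdots\subsetneq S_r$, set $S_0=\varnothing$, and form the block decomposition $U_k=S_k\setminus S_{k-1}$ for $1\leq k\leq r$ together with $U_{r+1}=\{1,\ldots,n\}\setminus S_r$. Define a block-constant candidate $y^*_i=\alpha_{d(i)}$, where $d(i)$ is the unique $k$ with $i\in U_k$ and the $\alpha_k$ are determined by the telescoping requirement $\alpha_k |U_k|=q(|S_k|)-q(|S_{k-1}|)$ for $1\leq k\leq r$, under the convention $q(|S_0|):=0$ so that $\alpha_1|U_1|=q(|S_1|)$; set $\alpha_{r+1}=M$ with $M>q(n)$. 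Telescoping gives $y^*\cdot\bfone_{S_j}=q(|S_j|)$, so $f_{S_j}(y^*)=0$ for all $j$. The task is then to verify $f_S(y^*)>0$ for every nonempty $S\notin\{S_1,\ldots,S_r\}$. Subsets $S$ meeting $U_{r+1}$ are handled by $y^*\cdot\bfone_S\geq M>q(n)\geq q(|S|)$, and for $S\subset S_r$, writing $t_k=|S\cap U_k|$ and $\Delta_k=q(|S_k|)-q(|S_{k-1}|)$ reduces the assertion to
\[
\sum_{k=1}^r \frac{t_k}{|U_k|}\Delta_k > q\!\pars{\sum_{k=1}^r t_k}
\]
for integer tuples $(t_1,\ldots,t_r)$ with $0\leq t_k\leq |U_k|$, not all zero, and not of the initial-segment form $(|U_1|,\ldots,|U_\ell|,0,\ldots,0)$ for any $\ell$.

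I expect this inequality to be the main obstacle, and I would prove it by induction on $r$. The base case $r=1$ is $(t_1/|U_1|)q(|U_1|)>q(t_1)$ for $1\leq t_1<|U_1|$, equivalent to strict monotonicity of $m\mapsto q(m)/m$ on $m\geq 1$, which follows from $3q(m)\leq q(m+1)$. The inductive step uses three consequences of (\ref{asdfjkl;1}): strict convexity of $q$ (since $q(m+1)-q(m)\geq 2q(m)>q(m)\geq q(m)-q(m-1)$); super-additivity $q(|S_r|)>q(|S_{r-1}|)+q(|U_r|)$, obtained from $q(a)+q(b)<q(\max(a,b)+1)\leq q(|S_r|)$ using $\min(|S_{r-1}|,|U_r|)\geq 1$; and strict monotonicity of $q(m)/m$. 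Putting $T=\sum_{k<r}t_k$ and $A=\sum_{k<r}(t_k/|U_k|)\Delta_k$, convexity yields $q(T+t_r)-q(T)\leq (t_r/|U_r|)(q(T+|U_r|)-q(T))\leq (t_r/|U_r|)\Delta_r$ whenever $T\leq |S_{r-1}|$, with strict inequality wherever strictness is needed. One then splits on $(t_1,\ldots,t_{r-1})$: when $t_r=0$ the problem reduces directly to the inductive hypothesis for the chain of length $r-1$; when non-initial with $t_r\geq 1$, induction supplies $A>q(T)$ and the convexity bound finishes; when initial of positive length $\ell$ with $t_r\geq 1$, $A=q(T)$ exactly and the convexity bound still finishes, the strictness coming either from strict increment comparison (if $\ell<r-1$) or from strict Jensen (if $t_r<|U_r|$); when $(t_k)_{k<r}$ is identically zero, so $T=0$ and $t_r\geq 1$, the inequality $(t_r/|U_r|)\Delta_r>q(t_r)$ follows by combining super-additivity $\Delta_r>q(|U_r|)$ with monotonicity of $q/m$.

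Finally, since $\partial\Omega\subset\ov\Omega$, one has the clean description
\[
\Omega(\{S_1,\ldots,S_r\})=\curlies{y\in\bigcap_i L_{S_i}:\ f_S(y)>0\text{ for every nonempty }S\notin\{S_1,\ldots,S_r\}},
\]
which is the intersection of the codimension-$r$ affine flat $\bigcap_i L_{S_i}$ with finitely many open half-spaces; it is therefore convex and relatively open, and $y^*$ makes it nonempty. Beyond the inductive inequality, the rest of the argument is largely a matter of arranging the pieces.
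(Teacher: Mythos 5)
Your argument is correct, and the necessity half (pairwise monotonicity from Lemma~\ref{asdfjkl;2}, together with $f_\varnothing\equiv -1$ ruling out the empty set) is essentially the paper's. Where you diverge is in the existence construction. The paper first extends the given monotone list to a \emph{maximal} monotone chain of length $n$, so that each step adds exactly one index $j_k$; the point is then built coordinatewise via $y_{j_k}=\alpha_k-\alpha_{k-1}$, choosing $\alpha_k=q(k)$ on the levels belonging to the original list and $q(k)<\alpha_k<(q(k)+q(k+1))/2$ on the intercalated levels. With singleton blocks the verification for $S$ not in the chain collapses to a single estimate: letting $k_S$ be minimal with $S\subset S_{i_{k_S}}$, one gets
\[
y\cdot\bfone_S\;\geq\;\alpha_{k_S}-\alpha_{k_S-1}\;>\;\frac{q(k_S)-q(k_S-1)}{2}\;\geq\;q(k_S-1)\;\geq\;q(|S|),
\]
using only $3q(a)\leq q(a+1)$. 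Your block-constant construction avoids the chain extension but pays for it with the inductive inequality, which is correct but substantially longer and requires extracting three separate consequences of (\ref{asdfjkl;1}) (strict discrete convexity, superadditivity, and monotonicity of $q(m)/m$) plus a four-way case split. The tradeoff is purely one of economy: the maximal-chain device turns the whole verification into one telescoping bound, so adopting it would let you discard the entire inequality analysis. Two minor points: your convention $q(|S_0|):=0$ conflicts with $q(0)=1$, and the paper sidesteps this by starting the telescope at $\alpha_0=0$ rather than redefining $q$; and your clean half-space description of $\Omega(\{S_1,\ldots,S_r\})$ at the end matches what the paper writes explicitly, so the convexity and relative openness assertions are identical in both treatments.
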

\begin{proof}
First, since $f_\varnothing(y)=-1$ for all $y$, if $y\in \partial \Omega$ 
and $f_{S_1}(y)=\ldots=f_{S_{r}}(y)=0$ then none of the $S_i$ can be empty. 
Next, choose distinct indices $i,j\in \{1,\ldots, r\}$. Since all boundary 
points of $\Omega$ must have strictly positive coordinates, evidently 
$\{S_i,S_j\}$ must be a monotone pair, for if not then 
$f_{S_i\cup S_j}(y)<0$ by Lemma \ref{asdfjkl;2}, but this impossible on 
$\partial\Omega$. Thus, we can conclude that either $S_i\subset S_j$ 
or $S_j\subset S_i$. Since this is true for any pair 
$i,j\in\{1,\ldots, r\}$ of distinct indices, it is clear that the set 
$\{S_1,\ldots, S_{r}\}$ must be monotone. Now the set 
$\Omega(\{S_1,\ldots,S_{r}\})$ can alternatively be written as
\begin{align*}
\Omega(&\{S_1,\ldots,S_{r}\}) \\
&=
(\cap_{S\in\{S_1,\ldots, S_{r}\}}\{y\cdot \bfone_S=q(|S|)\})
\bigcap (\cap_{S\neq \varnothing,S\notin\{S_1,\ldots, S_{r}\}}\{y\cdot \bfone_S>q(|S|)\})
\end{align*}
and each of the factors in the intersection
is convex, so the intersection must be as well. Also, it 
is clear that the intersection is relatively open in 
$\cap_{S\in\{S_1,\ldots, S_{r}\}}L_{S}$. It 
remains to prove that $\Omega(\{S_1,\ldots,S_{r}\})$ is nonempty. For this
we can extend the monotone set $\{S_1,\ldots, S_{r}\}$ into a monotone set 
$\{S_1,\ldots, S_{r},S_{r+1},\ldots, S_n\}$ of maximal length $n$. For 
$k\in \{1,\ldots n\}$ let $i_k\in\{1,\ldots ,n\}$ be the unique 
index such that $|S_{i_k}|=k$, let $j_k$ 
be the unique index such that $\{j_k\}=S_{i_k}\setminus S_{i_{k-1}}$
($S_{i_0}=\varnothing$), and let 
$\alpha\in\R^{n}$ be any vector such that 
\[
\begin{cases}
\alpha_k=q(k)\quad &i_k\in\{1,\ldots, r\} \\
q(k)<\alpha_k<\frac{q(k)+q(k+1)}{2} &i_k\in\{r+1,\ldots, n\}.
\end{cases}
\]
We claim that the vector $y$ defined by $y_{j_1}=\alpha_1$ and 
$y_{j_k}=\alpha_k-\alpha_{k-1}$ for $k\in\{2,\ldots n\}$ is an element 
of $\Omega(\{S_1,\ldots,S_{r}\})$. To prove this, observe 
that $y\cdot \bfone_{S_{i_k}}=\alpha_k$ so we only need to 
check that $y\cdot \bfone_S>q(|S|)$ for $S\notin\{S_1,\ldots, S_n\}$.
This is straightforward: let $k_S$ be the minimal number such 
that $S\subset S_{i_{k_S}}$, since $S\neq S_{i_{k_S}}$ evidently 
$|S|\leq k_S-1$ but also $S$ intersects $S_{i_{k_S}}\setminus S_{i_{k_S-1}}$
by the minimality of $k_S$ so 
\begin{align*}
y\cdot \bfone_S&\geq y\cdot \bfone_{S_{i_{k_S}}\setminus S_{i_{k_S-1}}} \\
&=\alpha_{k_S}-\alpha_{k_S-1}\\
&>q(k_S)-\frac{q(k_S-1)+q(k_S)}{2} \\
&=\frac{q(k_S)-q(k_S-1)}{2} \\
&\geq q(k_S-1) \\
&\geq q(|S|)
\end{align*}
according to the assumed hypotheses (\ref{asdfjkl;1}) on $q$ (this is 
where $3q(a)\leq q(a+1)$ is used).
\end{proof}

The following corollary follows immediately.
\begin{cor}\label{asdfjkl;5}
The boundary of $\Omega$ admits a decreasing filtration 
in closed sets given by 
\beq{asdfjkl;6}
\partial_r\Omega=
\bigcup_{r\leq \rho\leq n}\bigcup_{S_1\subset \cdots \subset S_{\rho}} \Omega(\{S_1,\ldots,S_{\rho}\})
\eeq
where the union is taken over all length not less than $r$ monotone 
lists of subsets which do not contain $\varnothing$. 
The top-dimensional stratum $\partial_r\Omega\setminus \partial_{r+1}\Omega$
in each filtration degree is relatively open in $\partial_r\Omega$ and 
its connected components are in bijection with the length-$r$ monotone 
lists of subsets which do not contain $\varnothing$.
\end{cor}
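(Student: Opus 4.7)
The plan is to treat Corollary \ref{asdfjkl;5} as a bookkeeping consequence of Lemma \ref{asdfjkl;3}. For any boundary point $y\in \partial\Omega$, define its \emph{vanishing set} $\mathcal{S}(y)=\{S\neq\varnothing:f_S(y)=0\}$. Lemma \ref{asdfjkl;3} asserts both that $\mathcal{S}(y)$ is a monotone chain of nonempty subsets of $[1,n]$ and that $y\in \Omega(\mathcal{S}(y))$. Consequently the family $\{\Omega(\mathcal{S})\}_{\mathcal{S}}$, indexed by monotone chains of nonempty subsets, partitions $\partial\Omega$ as $\mathcal{S}$ ranges over all such chains.

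First I would identify the right-hand side of (\ref{asdfjkl;6}) with the closed set of boundary points at which at least $r$ of the forms $f_S$ vanish. This set is closed because it is the intersection of the closed set $\partial\Omega$ with a finite union (over $r$-element subfamilies) of closed affine subspaces $\bigcap_{i=1}^{r}L_{S_i}$. The inclusion of this closed set into the right-hand side of (\ref{asdfjkl;6}) is precisely the content of Lemma \ref{asdfjkl;3}, while the reverse inclusion is immediate from the very definition of $\Omega(\{S_1,\ldots,S_\rho\})\subset\bigcap_{i=1}^\rho L_{S_i}$. The containment $\partial_{r+1}\Omega\subset\partial_r\Omega$ is visible from the formula, so we obtain a decreasing filtration by closed sets.

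Next, the top-dimensional stratum $\partial_r\Omega\setminus\partial_{r+1}\Omega$ is the locus of boundary points whose vanishing set has cardinality exactly $r$, and by the partition noted above it equals the disjoint union $\bigsqcup_{\mathcal{S}}\Omega(\mathcal{S})$ taken over length-$r$ monotone chains $\mathcal{S}$ of nonempty subsets. It is relatively open in $\partial_r\Omega$ because it is the complement of the closed set $\partial_{r+1}\Omega$. Each $\Omega(\mathcal{S})$ is nonempty, convex (hence connected) and relatively open in the codimension-$r$ affine subspace $\bigcap_{S\in\mathcal{S}}L_S$ by Lemma \ref{asdfjkl;3}, and distinct chains produce disjoint pieces since $y\in\Omega(\mathcal{S})$ forces $\mathcal{S}(y)=\mathcal{S}$. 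To conclude that each $\Omega(\mathcal{S})$ is a connected component of the top stratum, I would note that it is both open and closed there: openness follows from relative openness in the ambient affine subspace, and relative closedness follows from the fact that any sequence in $\Omega(\mathcal{S})$ converging inside the top stratum has its $r$ forms $f_{S_i}$ vanishing in the limit, which by length considerations forces the limit's vanishing chain to be exactly $\mathcal{S}$.

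There is no genuine obstacle, since Lemma \ref{asdfjkl;3} supplies every nontrivial ingredient (monotonicity of vanishing sets, plus nonemptiness, convexity and relative openness of each $\Omega(\mathcal{S})$); the remainder is point-set accounting. The only subtlety to watch is being careful that ``relatively open in the top stratum'' is the correct notion, which is precisely what Lemma \ref{asdfjkl;3}'s relative openness in $\bigcap_{S\in\mathcal{S}}L_S$ delivers.
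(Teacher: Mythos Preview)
Your proposal is correct and matches the paper's approach: the paper states the corollary with no proof beyond ``The following corollary follows immediately,'' and your argument simply spells out the bookkeeping from Lemma~\ref{asdfjkl;3} that the paper leaves implicit. One small remark: your openness claim (``openness follows from relative openness in the ambient affine subspace'') is slightly underspecified, since a priori nearby points of the top stratum need not lie in $\bigcap_{S\in\mathcal{S}}L_S$; but your closedness argument is complete, and since the top stratum is a \emph{finite} disjoint union of the $\Omega(\mathcal{S})$, closedness of each piece already forces openness of each piece, so the conclusion stands.
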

Next, define the map $F:\R^{n}\to \R^{n}$ by 
$F=(F_1,\ldots, F_n)$ with
\beq{asdfjkl;7}
F_i(y_1,\ldots, y_n)=\prod_{S\ni i} f_S(y_1,\ldots, y_n)
\eeq
\begin{lemma}\label{asdfjkl;8}
The jacobian determinant of $F$ is given by
\begin{align*}
\det &(\partial F/\partial y) \\
&=\prod_{S\neq \varnothing}f_S^{|S|-1}
\pars{\sum_{\varnothing \notin\{S_1,\ldots, S_n\}\subset\mathfrak{P}(\{1,\ldots, n\})} 
\det\pars{M^{\bfone_{S_1},\ldots,\bfone_{S_n}}}^2\prod_{\varnothing\neq P\notin\{S_1,\ldots, S_n\}}f_P}.
\end{align*}
\end{lemma}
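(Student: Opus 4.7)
The plan is to factor the Jacobian matrix as the product of a diagonal matrix with a symmetric matrix of Gram type, and then to apply the Cauchy--Binet formula. Using the product rule together with $\partial f_S/\partial y_j=(\bfone_S)_j$, a direct calculation yields
\[
\frac{\partial F_i}{\partial y_j}=\sum_{S\ni i}(\bfone_S)_j\prod_{S'\ni i,\,S'\neq S}f_{S'},
\]
which at any point where $F_i\neq 0$ can be rewritten as $F_i\sum_{S\ni i,\,S\ni j}1/f_S$. Thus, as an identity of rational functions in $y_1,\ldots,y_n$, we have the factorization $\partial F/\partial y=\operatorname{diag}(F_1,\ldots,F_n)\,M$ where
\[
M_{ij}=\sum_{S\neq\varnothing}\frac{(\bfone_S)_i(\bfone_S)_j}{f_S}.
\]

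Let $A$ be the $n\times(2^n-1)$ matrix whose columns are indexed by the nonempty subsets $S\subset\{1,\ldots,n\}$, with the $S$-column equal to $\bfone_S$, and let $D$ be the diagonal matrix of size $2^n-1$ with $S$-diagonal entry $1/f_S$. Then $M=ADA^T$, and the Cauchy--Binet formula immediately gives
\[
\det(M)=\sum_T\det(A_T)^2\prod_{S\in T}\frac{1}{f_S},
\]
where the sum runs over all $n$-element subsets $T$ of the index set of nonempty subsets of $\{1,\ldots,n\}$, and $A_T$ is the $n\times n$ submatrix of $A$ whose columns are indexed by $T$. Writing $T=\{S_1,\ldots,S_n\}$ in any order, $A_T$ is precisely the matrix $M^{\bfone_{S_1},\ldots,\bfone_{S_n}}$; since reordering columns only multiplies its determinant by a sign, the square $\det(M^{\bfone_{S_1},\ldots,\bfone_{S_n}})^2$ is a well-defined function of the unordered set $T$.

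Combining the above with the identity $\prod_i F_i=\prod_{S\neq\varnothing}f_S^{|S|}$ yields
\[
\det(\partial F/\partial y)=\prod_{S\neq\varnothing}f_S^{|S|}\sum_T\det\pars{M^{\bfone_{S_1},\ldots,\bfone_{S_n}}}^2\prod_{S\in T}\frac{1}{f_S},
\]
and absorbing one factor of $f_S$ into the $|S|$ exponent for each $S\in T$ converts the prefactor into $\prod_{S\neq\varnothing}f_S^{|S|-1}$ and leaves $\prod_{P\notin T}f_P$ inside each summand, which is precisely the identity claimed in the lemma.

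The main difficulty I anticipate is bookkeeping: one must keep careful track of whether summation indices are ordered tuples or unordered sets, and correctly account for the exponents of the various $f_S$. The manipulations above take place in the fraction field $\mathbf{R}(y_1,\ldots,y_n)$, but both sides of the final identity are manifestly polynomials in $y$, so the equality extends from a Zariski-open subset to all of $\mathbf{R}^n$, including the boundary $\partial\Omega$ where individual $f_S$ may vanish.
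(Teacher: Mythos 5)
Your proof is correct, and it follows the same overall structure as the paper's argument: factor the Jacobian as a diagonal matrix times the symmetric matrix $M_{ij}=\sum_{S\ni i,j}f_S^{-1}$, compute $\det$ of the diagonal part as $\prod_S f_S^{|S|}$, and then compute $\det M$. The one genuine difference is in how $\det M$ is handled: you recognize $M$ as a weighted Gram matrix $ADA^T$ and invoke the Cauchy--Binet formula, whereas the paper expands $Ae_1\wedge\cdots\wedge Ae_n$ by hand, uses the identity $\bfone_{S_i}^{\otimes 2}e_i=(e_i\cdot\bfone_{S_i})\bfone_{S_i}$, and reindexes the resulting sum over ordered tuples into a sum over unordered $n$-element families -- in effect re-deriving Cauchy--Binet for this specific situation. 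Your route is shorter and somewhat more conceptual since it names the classical tool being used; the paper's route is self-contained but longer. Both are valid, and your closing remark about passing from an identity of rational functions on a Zariski-dense open set to an identity of polynomials is the right way to justify the division by $f_S$ in the intermediate steps.
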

Before the proving the lemma, a few remarks are in order.
First, $M^{\bfone_{S_1},\ldots,\bfone_{S_n}}$ denotes the matrix 
having the given ordered list of vectors as \emph{columns}. The matrix 
$M^{\bfone_{S_1},\ldots,\bfone_{S_n}}$ itself depends on the order 
of the list $S_1,\ldots, S_n$ but the determinant of its square does 
not, so the fact that the summands are indexed by \emph{subcollections} and 
not \emph{ordered subcollections} of $\mathfrak{P}(\{1,\ldots, n\})\setminus
\varnothing$ is not an issue. Most importantly,
the factor
\begin{equation}\label{asdfjkl;700}
R=
\sum_{\varnothing \notin\{S_1,\ldots, S_n\}\subset\mathfrak{P}(\{1,\ldots, n\})} 
\det\pars{M^{\bfone_{S_1},\ldots,\bfone_{S_n}}}^2\prod_{\varnothing\neq P\notin\{S_1,\ldots, S_n\}}f_P
\end{equation}
in $\det (\partial F/\partial y) $ must be strictly positive at all boundary points of 
$\Omega$, for it is a sum of functions which are all nonnegative on 
$\overline{\Omega}$ so if it vanishes at a boundary point then all summands 
must vanish at that point, but by Lemma \ref{asdfjkl;3} there exists a 
monotone list $\{S_1,\ldots, S_r\}$ of nonempty subsets of $\{1,\ldots, n\}$ 
such that $f_S$ vanishes at the point in question if and only if 
$S\in\{S_1,\ldots, S_r\}$ so we can enlarge the given list to a monotone list 
$\{S_1,\ldots, S_n\}$ and thus conclude that for this particular list the summand
\[
\det\pars{M^{\bfone_{S_1},\ldots,\bfone_{S_n}}}^2\prod_{\varnothing\neq P\notin\{S_1,\ldots, S_n\}}f_P,
\]
and therefore the entire sum, does not vanish at the specified boundary point. 
This being true of every point in $\partial \Omega$, we conclude that according 
to Lemma \ref{asdfjkl;8},
$\det (\partial F/\partial y)$ is equal to the product $\prod_{S\neq \varnothing}f_S^{|S|-1}$ times the 
polynomial $R$ defined in (\ref{asdfjkl;700}) which is strictly positive on the closure $\overline{\Omega}$.
\begin{proof}[Proof of Lemma \ref{asdfjkl;8}.]
Evidently
\[
\frac{\partial F_i}{\partial y_j} 
=\sum_{S\ni i}|\{j\}\cap S|\prod_{S'\ni i, S'\neq S}f_{S'} 
=\sum_{S\ni i,j}\prod_{S'\ni i, S'\neq S}f_{S'} 
=\sum_{S\ni i,j}\frac{1}{f_S}\prod_{S'\ni i}f_{S'}.
\]
Factoring out the $j$-independent product gives $\frac{\partial F_i}{\partial y_j} 
=\pars{\prod_{S'\ni i}f_{S'}}\pars{\sum_{S\ni i,j}f_S^{-1}}$
from which we conclude that the Jacobian matrix 
$\partial F/\partial y$ factors as $\partial F/\partial y =DA$ where 
$D$ is the diagonal matrix with $i$-th diagonal entry 
$\prod_{S'\ni i}f_{S'}$ and $A$ is the matrix with 
$\sum_{S\ni i,j} f_S^{-1}$ in row $i$ and column $j$. 
Since $D$ is diagonal, its determinant is easily computed by 
observing that the factor $f_S$ appears in precisely $|S|$ entries 
on the diagonal, and therefore $\det (\partial F/\partial y) = \prod_{S\neq \varnothing} f_S^{|S|}\det A$. 
The computation thus reduces to that of $\det A$. 
For this, we observe that the matrix $A$ is the sum
$A=\sum_S  f_S^{-1}\bfone_S^{\otimes 2}$ 
where $\bfone_S^{\otimes 2}$ denotes the matrix with $1$ in entry $i,j$ 
if and only if $i$ and $j$ are elements of $S$ (i.e. the matrix 
$\bfone_S^{\otimes 2}$, viewed as a function on the set $\{1,\ldots, n\}^2$, 
is literally the external tensor square of the vector $\bfone_S$, when 
the latter is viewed as 
a function on the set $\{1,\ldots, n\}$). In order to compute $\det A$, 
we can expand the sums in the exterior product which defines the determinant:
\[
Ae_1\wedge \ldots \wedge A e_n 
= \sum_{S_1,\ldots, S_n\subset \{1,\ldots, n\}} \pars{\prod_{i=1}^n\frac{1}{f_{S_i}} }
(\bfone_{S_1}^{\otimes 2}e_1\wedge \ldots \wedge \bfone_{S_n}^{\otimes 2}e_n )
\]
Here the sum is taken over \emph{all} lists 
of $n$ nonempty subsets of $\{1,\ldots, n\}$, 
with or without repetitions. Now $\bfone_{S_i}^{\otimes 2}e_i$ is $\bfone_{S_i}$ 
if $i\in S_i$ and is zero otherwise, thus $\bfone_{S_i}^{\otimes 2}e_i=(e_i\cdot\bfone_{S_i})\bfone_{S_i}$
and therefore
\begin{align*}
Ae_1&\wedge \ldots \wedge A e_n  \\
&= \sum_{S_1,\ldots, S_n\subset \{1,\ldots, n\}} \pars{\prod_{i=1}^n\frac{1}{f_{S_i}} }
((\bfone_{S_1}\cdot e_1)\bfone_{S_1}\wedge \ldots \wedge (\bfone_{S_n}\cdot e_n)\bfone_{S_n}) \\
&=\sum_{S_1,\ldots, S_n\subset \{1,\ldots, n\}} \pars{\prod_{i=1}^n\frac{e_i\cdot\bfone_{S_i}}{f_{S_i}} }
\det (M^{\bfone_{S_1},\ldots,\bfone_{S_n}})(e_1\wedge \ldots \wedge e_n)
\end{align*}
where $e_i=\bfone_{\{i\}}$ is the $i$-th standard basis vector. 
If $S_1,\ldots, S_{2^n-1}$ is an 
enumeration of the nonempty subsets of $\{1,\ldots, n\}$ then evidently
\begin{align*}
\det A &=
\sum_{S_1,\ldots, S_n\subset \{1,\ldots, n\}}  \pars{\prod_{i=1}^n\frac{e_i\cdot\bfone_{S_i}}{f_{S_i}} }
\det (M^{\bfone_{S_1},\ldots,\bfone_{S_n}})  \\
&=\sum_{1\leq i_1<\cdots <i_n\leq 2^{n}-1}\sum_{\sigma\in\mathfrak{S}_{n}}
\frac{\det (M^{\bfone_{S_{i_{\sigma 1}}},\ldots,\bfone_{S_{i_{\sigma n}}}})\prod_{j=1}^n(e_j\cdot\bfone_{S_{i_{\sigma j}}})}{\prod_{j=1}^nf_{S_{i_{\sigma j}}}} \\
&=\sum_{1\leq i_1<\cdots <i_n\leq 2^{n}-1}
\frac{\det (M^{\bfone_{S_{i_1}},\ldots,\bfone_{S_{i_n}}})}{\prod_{j=1}^nf_{S_{i_j}}}
\sum_{\sigma\in\mathfrak{S}_{n}}
\sgn(\sigma)\prod_{j=1}^n(e_j\cdot\bfone_{S_{i_{\sigma j}}}) \\
&=\sum_{1\leq i_1<\cdots <i_n\leq 2^{n}-1}
\frac{1}{\prod_{j=1}^nf_{S_{i_j}}}
\det (M^{\bfone_{S_{i_1}},\ldots,\bfone_{S_{i_n}}})^2 \\
&=\sum_{\{S_1,\ldots,S_n\}\subset\mathfrak{P}(\{1,\ldots,n\})\setminus\varnothing}
\frac{\prod_{\varnothing\neq P\notin\{S_1,\ldots, S_n\}}f_P}{\prod_{S\neq\varnothing}f_{S}}
\det (M^{\bfone_{S_{1}},\ldots,\bfone_{S_{n}}})^2.
\end{align*}
Therefore,
\begin{align*}
\det &(\partial F/\partial y) \\
&= \prod_{S\neq \varnothing} f_S^{|S|}
\pars{\sum_{\{S_1,\ldots,S_n\}\subset\mathfrak{P}(\{1,\ldots,n\})\setminus\varnothing}
\frac{\prod_{\varnothing\neq P\notin\{S_1,\ldots, S_n\}}f_P}{\prod_{S\neq\varnothing}f_{S}}
\det (M^{\bfone_{S_{1}},\ldots,\bfone_{S_{n}}})^2}
\end{align*}
and the lemma follows after factoring $\prod_{S\neq\varnothing} f_{S}^{-1}$
out of the sum.
\end{proof}

\begin{lemma}\label{asdfjkl;9}
The map $F$ is a polynomial diffeomorphism from the region $\Omega$ onto the orthant $\R^{n}_+$.
\end{lemma}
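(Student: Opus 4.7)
The plan is to verify that $F$ is a proper local diffeomorphism from the convex region $\Omega$ onto $\R^n_+$, and then to invoke the fact that a proper local diffeomorphism whose target is connected and simply connected must be a global diffeomorphism. That $F$ is a local diffeomorphism on $\Omega$ will follow from Lemma \ref{asdfjkl;8}: every $f_S$ is strictly positive on $\Omega$, so $\prod_{S\neq\varnothing}f_S^{|S|-1}>0$, and the polynomial $R$ of (\ref{asdfjkl;700}) is a sum of manifestly nonnegative terms on $\Omega$, at least one of which is strictly positive --- namely the one indexed by any maximal monotone chain such as $\{\{1\},\{1,2\},\ldots,\{1,\ldots,n\}\}$, for which the associated matrix $M$ is upper triangular with unit diagonal and hence has nonzero determinant. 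Therefore $\det(\partial F/\partial y)>0$ throughout $\Omega$, and $F(\Omega)\subset\R^n_+$ is immediate since each $F_i$ is a product of positive quantities.

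Next I would establish that $F$ is proper. The constraint $f_{\{i\}}(y)>0$ on $\Omega$ forces $y_i>q(1)$, so every coordinate is uniformly bounded below. For the upper bound I would argue by contradiction: suppose $y^{(k)}\in\Omega$ is a sequence with $F(y^{(k)})$ confined to a compact set $K\subset\R^n_+$ but with $|y^{(k)}|\to\infty$. Passing to a subsequence I may assume $y_{i^*}^{(k)}\to\infty$ for some index $i^*$, and using $y_j^{(k)}>q(1)$ for every $j$, every $S\ni i^*$ then satisfies
\[
 f_S(y^{(k)})=\sum_{j\in S}y_j^{(k)}-q(|S|)\;\geq\; y_{i^*}^{(k)}+(|S|-1)q(1)-q(|S|)\;\longrightarrow\;\infty,
\]
so $F_{i^*}(y^{(k)})=\prod_{S\ni i^*}f_S(y^{(k)})\to\infty$, contradicting $F(y^{(k)})\in K$. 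Thus $y^{(k)}$ is bounded and admits a convergent subsequence with limit $y^*\in\overline{\Omega}$. If $y^*\in\partial\Omega$, Lemma \ref{asdfjkl;3} supplies some $S$ with $f_S(y^*)=0$, whence $F_i(y^*)=0$ for any $i\in S$, contradicting $F(y^{(k_m)})\to F(y^*)\in K\subset\R^n_+$. Hence $y^*\in\Omega$, and $F$ is proper.

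To finish, I would observe that $\Omega$ is nonempty (the recipe in the proof of Lemma \ref{asdfjkl;3}, with strict inequalities in place of equalities in the definition of $\alpha$, produces an interior point) and convex, hence connected, while $\R^n_+$ is convex and therefore simply connected. A proper local diffeomorphism between connected Hausdorff manifolds is a surjective finite-sheeted covering map, and a covering of a simply connected base is trivial, so $F$ is a diffeomorphism from $\Omega$ onto $\R^n_+$.

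The main obstacle is the properness step, in particular ruling out the escape of coordinates to infinity. The estimate above relies on the lower bound $y_j>q(1)$ being strong enough that each $f_S(y^{(k)})$ is forced to grow whenever even one of its summand coordinates grows; this is guaranteed precisely by the growth hypotheses (\ref{asdfjkl;1}) on $q$, which ensure that the divergent quantity $y_{i^*}^{(k)}$ dominates the fixed constant $q(|S|)-(|S|-1)q(1)$ for each $S\ni i^*$. Without these hypotheses, or without the stratified description of $\partial\Omega$ supplied by Lemma \ref{asdfjkl;3}, neither the boundedness argument nor the identification of the subsequential limit as an interior point of $\Omega$ would go through.
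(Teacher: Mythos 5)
Your proof is correct, and it takes a genuinely different route from the paper's. The paper argues injectivity directly (asserting it follows from the positivity of the partial derivatives $\partial F_i/\partial y_j$ on $\Omega$) and proves surjectivity constructively: fixing a base point $(t,\ldots,t)\in\Omega$ with $t>q(n)/n$, it integrates the vector field $(\partial F/\partial y)^{-1}\bigl(x-F(t,\ldots,t)\bigr)$ for unit time toward a prescribed target $x\in\R^n_+$, observing that the $F$-image of the integral curve is a straight segment in the convex orthant and hence never meets $\partial\R^n_+$, so the curve stays in $\Omega$. Your argument instead establishes that $F$ is a proper local diffeomorphism onto a simply connected target and then invokes covering-space theory. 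Each has its merits: the paper's flow argument is explicitly constructive (it actually builds the preimage, and implicitly exhibits the path-lifting property); yours is more abstract but obtains injectivity and surjectivity in one stroke, without having to justify why positivity of the partials alone forces injectivity on the convex domain, and the properness computation is elementary and self-contained. One small correction in your closing remarks: the coordinate-escape estimate needs only that the values $q(m)$ are finite, not the growth hypotheses (\ref{asdfjkl;1}); the latter enter indirectly, through the description of $\partial\Omega$ used to rule out subsequential limits on the boundary --- though even there all you actually need is the elementary fact, immediate from $\Omega=\bigcap_{S\neq\varnothing}\{f_S>0\}$, that every boundary point lies on some $L_S$, rather than the full strength of Lemma \ref{asdfjkl;3}.
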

\begin{proof}
Injectivity is easily proved, since the partial derivatives 
$\partial F_i/\partial y_j$ are positive on $\Omega$. To prove surjectivity, choose any 
fixed element of $\Omega$ such as $(t,\ldots, t)$ with $t>q(n)/n$. If $(x_1,\ldots, x_n)\in\R^{n}_+$
is arbitrarily chosen and $v=(x_1,\ldots,x_n)-F(t,\ldots,t)$ then $(\partial F/\partial y)^{-1}v$ is a vector field on $\Omega$. Flowing 
through this vector field in one unit of time will carry $(t,\ldots, t)$ into a point 
$(y_1,\ldots, y_n)$ such that $F(y_1,\ldots, y_n)=(x_1,\ldots, x_n)$. Moreover, the resulting 
integral curve cannot leave $\Omega$, for if it did then its image would have to leave the  
orthant $\R^{n}_+$ but the image of this curve is the line segment connecting $F(t,\ldots, t)$
to $(x_1,\ldots, x_n)$ and the orthant is convex so this line segment cannot intersect 
the boundary of the orthant. 
This proves that the $F$-preimage $(y_1,\ldots, y_n)$ of $(x_1,\ldots, x_n)$
is indeed an element of $\Omega$. Since $(x_1,\ldots, x_n)$ was chosen arbitrarily, $F$ must 
map $\Omega$ surjectively onto $\R^{n}_+$.
\end{proof}

\section{Pullback of $\mathfrak{I}^n$ through $F$}

As described in the introduction, we are concerned 
with the singularities of the meromorphic continuation
to $\C^{2^{n}-1}$ of the integral
\[
\mathfrak{I}^{n}((\lambda_S)_{S\neq \varnothing})
=\int_{\Delta_n}\prod_{S\neq \varnothing}(x\cdot \bfone_S)^{\lambda_S} dx_1\wedge\ldots\wedge dx_n
\]
which converges and defines a holomorphic 
function in the region $\cap_{S\neq\varnothing}\{\operatorname{Re} \lambda_{S}>0\}$.
This integral can now be pulled back to $\Omega'=F^{-1}(\Delta_{n})=\Omega\cap\{\sum_i F_i<1\}$ 
through the map $F$ defined in the previous section, allowing a detailed 
study of the singularities:

\begin{align}
\mathfrak{I}^{n}((\lambda_S)_{S\neq \varnothing})
&=\int_{\Delta_{n}}\prod_{S\neq \varnothing}(x\cdot \bfone_S)^{\lambda_S}dx_1\wedge \ldots \wedge dx_n\notag\\
&=\int_{\Omega'}F^\ast\pars{\prod_{S\neq \varnothing}(x\cdot \bfone_S)^{\lambda_S}  dx_1\wedge\ldots\wedge dx_n}\notag \\
&=\int_{\Omega'}\prod_{S\neq \varnothing}(F\cdot \bfone_S)^{\lambda_S}(\det\partial F/\partial y)  dy_1\wedge\ldots\wedge dy_n.
\label{asdfjkl;102}
\end{align}
The integrand in (\ref{asdfjkl;102}) can be further 
simplified as follows,
\[
\prod_{S\neq \varnothing}(F\cdot\bfone_S)^{\lambda_S}
= \prod_{S\neq \varnothing}\pars{\prod_{T\supset S}f_T}^{\lambda_S}
\pars{\sum_{i\in S} \prod_{S\not\subset T\ni i}f_T}^{\lambda_S}  
= \prod_{S\neq \varnothing}f_S^{\sum_{T\subset S}\lambda_T}
P_S^{\lambda_S}
\]
where $P_S=\sum_{i\in S} \prod_{S\not\subset T\ni i}f_T$
as in the introduction.

Regarding $P_S$, we observe that it 
must be strictly positive on $\overline{\Omega'}$. The 
only points in question are the boundary points, and if the 
sum vanishes at a boundary point then \emph{all} of the summands
must vanish. This cannot happen, for as a result of the monotonicity 
property of the boundary (Corollary \ref{asdfjkl;5}), 
at any given boundary point $x\in\partial \Omega'$
there is a unique monotone list $T_1\subset \cdots \subset T_r$ 
such that 
$f_U(x)=0$ if and only if $U$ is one of the $T_j$. For any 
$S$, either $S\subset T_1$ in which case every summand in $P_S$ 
is positive, or from 
among these $T_j$ there is a maximal choice $T_\rho$
such that $T_\rho\subsetneq S$, i.e. 
$T_1\subset \cdots T_\rho\subsetneq S\subset T_{\rho+1}\subset \cdots\subset T_r$. 
In the latter case, as $T_\rho$ 
is a \emph{proper} subset of $S$ we are free to choose an element $i_x\in S$ 
such that 
$i_x\notin T_\rho\supset\cdots \supset T_1$. If $i_x\in T$ then either 
$f_T(x)>0$ or $T\supset S$. Consequently, the summand 
$\prod_{S\not\subset T\ni i_x}f_T$ is positive at $x$. This being 
true for any $x\in \partial \overline{\Omega'}$, evidently 
$P_S$ is positive on $\overline{\Omega'}$ for every $S$. 

From this we obtain the expression (\ref{asdfjkl;6000})
from the introduction:
\[
\mathfrak{I}^n((\lambda_S)_{S\neq \varnothing})
=\int_{\Omega'}\prod_{S\neq \varnothing}f_S^{|S|-1+\sum_{T\subset S}\lambda_T} P_S^{\lambda_S} R dy_1\wedge\ldots\wedge dy_n.
\]
To complete the proof of Theorem \ref{asdfjkl;404}, 
the boundary of $\Omega'$ can be covered by 
a partition of unity in such a way that 
in an open neighborhood of the support of every 
element of the partition
there is a monotone list $S_1\subset \cdots\subset S_r$
such that $df_{S_1}\wedge\ldots\wedge df_{S_r}$ is 
a nonzero $r$-form and such that every boundary 
hyperplane in the support of $\varphi$ is defined 
by one of the $f_{S_i}$. The list can then be completed 
to a full coordinate system by adding in other 
$f_{S_i}$ and possibly also $F_1+\ldots +F_n$ if the 
support of $\varphi$ intersects the boundary hypersurface 
defined by $F_1+\ldots +F_n=1$. This reduces the computation 
to that of a finite sum of integrals of the form 
\[
\int_{\Omega'}\prod_{S\neq \varnothing}f_S^{|S|-1+\sum_{T\subset S}\lambda_T} P_S^{\lambda_S} R \varphi dy_1\wedge\ldots\wedge dy_n
\]
where $\varphi$ is an element of the chosen 
partition of unity and the integrand can be rewritten 
using the coordinates $f_{S_1},\ldots,f_{S_n}$ in most cases
(or $f_{S_1},\ldots,f_{S_{n-1}},F_1+\ldots +F_n$ in 
other cases) as an integral 
of the form 
\[
\int_{0<f_{S_i}<1, 1\leq i\leq n} f_{S_1}^{|S_1|-1+\sum_{T\subset S_1}\lambda_T} \cdots f_{S_n}^{|S_n|-1+\sum_{T\subset S_n}\lambda_T} 
\Phi((\lambda_S)_{S\neq \varnothing})\varphi df_{S_1}\wedge\ldots\wedge df_{S_n}
\]
where $\Phi(f_{S_1},\ldots,f_{S_n},(\lambda_S)_{S\neq \varnothing})$ is an entire 
function of $(\lambda_S)_{S\neq \varnothing}\in \C^{2^n-1}$ taking values in the analytic and 
nonvanishing functions on the cube $0<f_{S_i}<1, 1\leq i\leq n$.

Now we can define a new set of variables $(\psi_S)_{S\neq \varnothing}\in \C^{2^n-1}$ 
and consider the integral
\[
\int_{0<f_{S_i}<1, 1\leq i\leq n} f_{S_1}^{|S_1|-1+\sum_{T\subset S_1}\lambda_T} \cdots f_{S_n}^{|S_n|-1+\sum_{T\subset S_n}\lambda_T} 
\Phi((\psi_S)_{S\neq \varnothing})\varphi df_{S_1}\wedge\ldots\wedge df_{S_n}
\]
which for $\Re\lambda_S>0$ is apparently a holomorphic 
function of the $2^{n+1}-2$ complex variables 
$(\lambda_S)_{S\neq\varnothing},(\psi_S)_{S\neq \varnothing}$, 
For $(\psi_S)_{S\neq \varnothing}\in \C^{2^n-1}$ fixed, 
this integral evidently extends to a meromorphic function on 
$\C^{2^n-1}$ in the variables $(\lambda_S)_{S\neq \varnothing}$
with poles in the union of hyperplanes defined by 
$|S_i|-1+\sum_{T\subset S_i}\lambda_T\in \{-1,-2,\ldots\}$ 
for $1\leq i\leq n$ as can be shown by expanding the factor 
$\Phi((\psi_S)_{S\neq \varnothing})\varphi $ into its taylor 
series in the coordinates $f_{S_1},\ldots, f_{S_n}$ and 
integrating termwise. 

Thus, the given integral extends to a meromorphic function 
on $\C^{2^{n+1}-2}$ with poles in the given list of hyperplanes
and the original integral is the restriction of this meromorphic 
function to the diagonal subspace $\{\lambda_S-\psi_S=0\}_{S\neq \varnothing}$. 
This completes the proof of Theorem \ref{asdfjkl;404}.

\begin{bibdiv}
\begin{biblist}

\bib{MR0256156}{article}{
   author={Atiyah, M. F.},
   title={Resolution of singularities and division of distributions},
   journal={Comm. Pure Appl. Math.},
   volume={23},
   date={1970},
   pages={145--150},
   issn={0010-3640},
   review={\MR{0256156}},
}

\bib{MR878239}{article}{
   author={Barlet, Daniel},
   title={Monodromie et p\^oles du prolongement m\'eromorphe de $\int_X\vert f\vert ^{2\lambda}\square$},
   language={French, with English summary},
   journal={Bull. Soc. Math. France},
   volume={114},
   date={1986},
   number={3},
   pages={247--269},
   issn={0037-9484},
   review={\MR{878239}},
}

\bib{MR2320949}{article}{
   author={Baudoin, Fabrice},
   author={Coutin, Laure},
   title={Operators associated with a stochastic differential equation
   driven by fractional Brownian motions},
   journal={Stochastic Process. Appl.},
   volume={117},
   date={2007},
   number={5},
   pages={550--574},
   issn={0304-4149},
   review={\MR{2320949 (2008c:60050)}},
   doi={10.1016/j.spa.2006.09.004},
}

\bib{MR0247457}{article}{
   author={Bern{\v{s}}te{\u\i}n, I. N.},
   author={Gel{\cprime}fand, S. I.},
   title={Meromorphy of the function $P^{\lambda }$},
   language={Russian},
   journal={Funkcional. Anal. i Prilo\v zen.},
   volume={3},
   date={1969},
   number={1},
   pages={84--85},
   issn={0374-1990},
   review={\MR{0247457}},
}

\bib{MR2604669}{book}{
   author={Friz, Peter K.},
   author={Victoir, Nicolas B.},
   title={Multidimensional stochastic processes as rough paths},
   series={Cambridge Studies in Advanced Mathematics},
   volume={120},
   note={Theory and applications},
   publisher={Cambridge University Press},
   place={Cambridge},
   date={2010},
   pages={xiv+656},
   isbn={978-0-521-87607-0},
   review={\MR{2604669 (2012e:60001)}},
}

\bib{MR0166596}{book}{
   author={Gel'fand, I. M.},
   author={Shilov, G. E.},
   title={Generalized functions. Vol. I: Properties and operations},
   series={Translated by Eugene Saletan},
   publisher={Academic Press, New York-London},
   date={1964},
   pages={xviii+423},
   review={\MR{0166596}},
}

\bib{MR1555421}{article}{
   author={Young, L. C.},
   title={An inequality of the H\"older type, connected with Stieltjes
   integration},
   journal={Acta Math.},
   volume={67},
   date={1936},
   number={1},
   pages={251--282},
   issn={0001-5962},
   review={\MR{1555421}},
   doi={10.1007/BF02401743},
}

\bib{MR1384760}{book}{
   author={Varchenko, A.},
   title={Multidimensional hypergeometric functions and representation
   theory of Lie algebras and quantum groups},
   series={Advanced Series in Mathematical Physics},
   volume={21},
   publisher={World Scientific Publishing Co., Inc., River Edge, NJ},
   date={1995},
   pages={x+371},
   isbn={981-02-1880-X},
   review={\MR{1384760}},
   doi={10.1142/2467},
}

\end{biblist}
\end{bibdiv}

\end{document}